\def\@settitle{%
  \vspace*{-20pt}
  \begin{flushleft}%
    \baselineskip14\p@\relax
    \normalfont\bfseries\LARGE
    \@title
  \end{flushleft}%
}
\def\@setauthors{%
  \begingroup
  \def\thanks{\protect\thanks@warning}%
  \trivlist
  \large \@topsep30\p@\relax
  \advance\@topsep by -\baselineskip
  \item\relax
  \author@andify\authors
  \def\\{\protect\linebreak}%
  \authors
  \ifx\@empty\contribs
  \else
    ,\penalty-3 \space \@setcontribs
    \@closetoccontribs
  \fi
  \normalfont
  \@setaddresses
  \endtrivlist
  \endgroup
}
\def\@setaddresses{\par
  \nobreak \begingroup\raggedright
  \small
  \def\author##1{\nobreak\addvspace\smallskipamount}%
  \def\\{\unskip, \ignorespaces}%
  \interlinepenalty\@M
  \def\address##1##2{\begingroup
    \par\addvspace\bigskipamount\noindent
    \@ifnotempty{##1}{(\ignorespaces##1\unskip) }%
    {\ignorespaces##2}\par\endgroup}%
  \def\curraddr##1##2{\begingroup
    \@ifnotempty{##2}{\nobreak\noindent\curraddrname
      \@ifnotempty{##1}{, \ignorespaces##1\unskip}\/:\space
      ##2\par}\endgroup}%
  \def\email##1##2{\begingroup
    \@ifnotempty{##2}{\smallskip\nobreak\noindent E-mail address%
      \@ifnotempty{##1}{, \ignorespaces##1\unskip}\/:\space
      \ttfamily##2\par}\endgroup}%
  \def\urladdr##1##2{\begingroup
    \def~{\char`\~}%
    \@ifnotempty{##2}{\nobreak\noindent\urladdrname
      \@ifnotempty{##1}{, \ignorespaces##1\unskip}\/:\space
      \ttfamily##2\par}\endgroup}%
  \addresses
  \endgroup
  \global\let\addresses=\@empty
}
\def\@setabstracta{%
    \ifvoid\abstractbox
  \else
    \skip@25\p@ \advance\skip@-\lastskip
    \advance\skip@-\baselineskip \vskip\skip@
    \box\abstractbox
    \prevdepth\z@ 
    \vskip-10pt
  \fi
}
\renewenvironment{abstract}{%
  \ifx\maketitle\relax
    \ClassWarning{\@classname}{Abstract should precede
      \protect\maketitle\space in AMS document classes; reported}%
  \fi
  \global\setbox\abstractbox=\vtop \bgroup
    \normalfont\small
    \list{}{\labelwidth\z@
      \leftmargin0pc \rightmargin\leftmargin
      \listparindent\normalparindent \itemindent\z@
      \parsep\z@ \@plus\p@
      
    }%
    \item[\hskip\labelsep\bfseries\abstractname.]%
}{%
  \endlist\egroup
  \ifx\@setabstract\relax \@setabstracta \fi
}
\def\section{\@startsection{section}{1}%
  \z@{-1.2\linespacing\@plus-.5\linespacing}{.8\linespacing}%
  {\normalfont\bfseries\large}}
\def\subsection{\@startsection{subsection}{2}%
  \z@{-.8\linespacing\@plus-.3\linespacing}{.3\linespacing\@plus.2\linespacing}%
  {\normalfont\bfseries}}
\def\subsubsection{\@startsection{subsubsection}{3}%
  \z@{.7\linespacing\@plus.1\linespacing}{-1.5ex}%
  {\normalfont\itshape}}
\def\@secnumfont{\bfseries}
\def\Z{\mathbb{Z}}
\def\Q{\mathbb{Q}}
\def\d{\partial}
\def\k{\mathds{k}}
\def\+{\oplus}
\def\hat{\widehat}
\newcommand\goesupto{\nearrow}
\newcommand\goesdownto{\searrow}
\theoremstyle{plain}
\newtheorem{theorem}{Theorem}[section]
\newtheorem{proposition}[theorem]{Proposition}
\newtheorem{lemma}[theorem]{Lemma}
\newtheorem*{theoremA}{Theorem A}
\newtheorem*{theoremB}{Theorem B}
\newtheorem*{theoremC}{Theorem C}
\newtheorem{theoremalpha}{Theorem}
\newtheorem{corollaryalpha}[theoremalpha]{Corollary} 
\theoremstyle{definition}
\newtheorem{definition}[theorem]{Definition}
\newtheorem{example}[theorem]{Example}
\newtheorem{remark}[theorem]{Remark}
\newtheorem{question}{Question}
\newtheorem*{acknowledgement}{Acknowledgement}
\def\to{\mathchoice{\longrightarrow}{\rightarrow}{\rightarrow}{\rightarrow}}
\newcommand{\shortxra}[2][]{\ext@arrow 0359\rightarrowfill@{#1}{#2}}
\def\longrightarrowfill@{\arrowfill@\relbar\relbar\longrightarrow}
\newcommand{\longxra}[2][]{\ext@arrow 0359\longrightarrowfill@{#1}{#2}}
\newcommand{\cT}{\mathcal{T}}
\newcommand{\cC}{\mathcal{C}}
\newcommand{\spinct}{\mathfrak{t}}
\begin{document}

\title [An infinite-rank summand of knots with trivial Alexander polynomial]
{An infinite-rank summand of knots with trivial Alexander polynomial}

\author{Min Hoon Kim}
\address{
  School of Mathematics\\
  Korea Institute for Advanced Study \\
  Seoul 02455\\
  Republic of Korea
}
\email{kminhoon@kias.re.kr}

\author{Kyungbae Park}
\address{
	School of Mathematics\\ 
	Korea Institute for Advanced Study\\ 
	Seoul 02455\\
	Republic of Korea}
\email{kbpark@kias.re.kr}
\urladdr{newton.kias.re.kr/~kbpark} 



\maketitle
\begin{abstract}
	We show that there exists a $\Z^\infty$-summand in the subgroup of the knot concordance group generated by knots with trivial Alexander polynomial. To this end we use the invariant Upsilon $\Upsilon$ recently introduced by Ozsv\'ath, Stipsicz and Szab\'o using knot Floer homology. We partially compute $\Upsilon$ of $(n,1)$-cable of the Whitehead double of the trefoil knot. For this computation of $\Upsilon$, we determine a sufficient condition for two satellite knots to have identical $\Upsilon$ for any pattern with nonzero winding number.
\end{abstract}

\section{Introduction}



The celebrated theorems on 4-dimensional topology due to Donaldson \cite{Donaldson:1983-1} and Freedman \cite{Freedman:1982-1} have an interesting consequence on the study of knot concordance. Freedman proved that knots with trivial Alexander polynomial are topologically slice \cite{Freedman:1982-2,Freedman-Quinn:1990-1,Garoufalidis-Teichner:2004-1}. Using the Donaldson's diagonalization theorem, Casson and Akbulut had noticed that there exists a knot with trivial Alexander polynomial, which is not smoothly slice. (Their results are unpublished. See the paper of Cochran and Gompf \cite{Cochran-Gompf:1988-1}.) Topologically slice knots which are not smoothly slice measure the subtle difference between topological and smooth category in the study of 4-dimensional topology. For example, it is well-known that a topologically slice knot which is not smoothly slice gives an exotic $\mathbb{R}^4$ \cite[Theorem 9.4.23]{Gompf-Stipsicz:1999-1}. 

Following these results, topologically slice knots (modulo smooth concordance) have become of great interest and been studied extensively. We would like to review some results related to this. We first define certain subgroups of the knot concordance group. 

Let $\mathcal{C}$ and $\mathcal{C}^{top}$ be the smooth and topological knot concordance group, respectively. Let $\mathcal{C}_{T}$ be the subgroup of $\mathcal{C}$ consisting of topologically slice knots. That is, $\mathcal{C}_T$ is the kernel of the natural map $\mathcal{C}\to \mathcal{C}^{top}$. Let $\mathcal{C}_\Delta$ be the subgroup of $\mathcal{C}$ generated by knots with trivial Alexander polynomial. Then $\mathcal{C}_\Delta$ is a subgroup of $\mathcal{C}_T$ by the aforementioned result of Freedman. 

Using Furuta's results \cite{Furuta:1990-1} on Fintushel-Stern invariants (defined in \cite{Fintushel-Stern:1985-1}), Endo \cite{Endo:1995-1} proved that there is a $\mathbb{Z}^\infty$-subgroup in $\mathcal{C}_\Delta$ consisting of certain Pretzel knots. In \cite{Hedden-Kirk:2011-1, Hedden-Kirk:2012-1}, Hedden and Kirk introduced similar gauge theoretic invariants and proved that there is a $\mathbb{Z}^\infty$-subgroup in $\mathcal{C}_\Delta$ consisting of the Whitehead doubles of certain torus knots. 

Using Ozsv\'{a}th-Szab\'{o}'s Heegaard Floer correction terms \cite{Ozsvath-Szabo:2003-2}, it is known that $\mathcal{C}_T/\mathcal{C}_\Delta$ is highly non-trivial. Hedden, Livingston and Ruberman \cite{Hedden-Livingston-Ruberman:2012-1} proved that $\mathcal{C}_T/\mathcal{C}_\Delta$ contains a $\Z^\infty$-subgroup, and Hedden, Kim and Livingston \cite{Hedden-Kim-Livingston:2016-1} showed that it also contains a $\Z_2^\infty$-subgroup. 

Despite of these developments, the splitting problem of $\mathcal{C}_T$ (for example, the existence of a $\Z^\infty$-summand in $\mathcal{C}_T$) was poorly understood and had been regarded as a difficult but interesting problem. The first result is due to Livingston. In \cite{Livingston:2004-1}, he found a $\Z$-summand in $\mathcal{C}_\Delta$ using Ozsv\'{a}th-Szab\'{o}'s $\tau$-invariant \cite{Ozsvath-Szabo:2004-1}. 
By the development of concordance homomorphisms to the integer, including the $\tau$-invariant, Ramsussen's $s$-invariant \cite{Rasmussen:2010-1} and Manolescu-Owens' $\delta$-invariant \cite{Manolescu-Owens:2007-1}, it is known that there exists a $\Z^3$-summand in $\mathcal{C}_\Delta$ \cite{Livingston:2008-1}. 
By generalizing $\delta$-invariant, Jabuka \cite{Jabuka:2012-1} obtained infinitely many knot concordance homomorphisms on $\mathcal{C}_T$. However, Jabuka's homomorphisms are very difficult to calculate simultaneously and linear independence of them is still mysterious.

Recently, using her $\epsilon$-invariant, Hom \cite{Hom:2015-3} proved the existence of a $\Z^\infty$-summand in $\mathcal{C}_T$. After Hom's work, Ozsv\'{a}th, Stipsicz and Szab\'{o} \cite{Ozsvath-Stipsicz-Szabo:2014-1} defined the Upsilon invariant, denoted by $\Upsilon$, and found a $\Z^\infty$-summand in $\mathcal{C}_T$ using $\Upsilon$. We will discuss later that Hom's knots and Ozsv\'{a}th-Stipsicz-Szab\'{o}'s knots have non-trivial Alexander polynomials. Now, it is natural to ask whether $\mathcal{C}_\Delta$ contains a $\Z^\infty$-summand or not. Our main result addresses this question. 

\begin{theoremalpha}\label{thm:A}There exists a $\Z^\infty$-summand in $\mathcal{C}_{\Delta}$.\end{theoremalpha}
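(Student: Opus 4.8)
The plan is to exhibit an explicit family $K_{1},K_{2},\dots$ of knots with trivial Alexander polynomial, together with a matching family of concordance homomorphisms built from $\Upsilon$ that splits the subgroup they generate. Fix the positive, untwisted Whitehead double $W=D_{+}(T_{2,3})$ of the right-handed trefoil; since $\Delta_{W}(t)=1$ we have $W\in\mathcal{C}_{\Delta}$. For $n\ge 1$ let $K_{n}=W_{n,1}$ be the $(n,1)$-cable of $W$. The $(n,1)$-cable pattern is unknotted in $S^{3}$ and has winding number $n$, so the satellite formula for the Alexander polynomial gives $\Delta_{K_{n}}(t)=\Delta_{T_{n,1}}(t)\cdot\Delta_{W}(t^{n})=1$, whence $K_{n}\in\mathcal{C}_{\Delta}$. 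After passing to a sufficiently sparse subsequence of indices, I will show that the $K_{n}$ are linearly independent and generate a direct summand of $\mathcal{C}_{\Delta}$, with $\Upsilon$ providing both facts.

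The computational core is to understand $\Upsilon_{K_{n}}$, and the difficulty is that $CFK^{\infty}$ of a cable of a Whitehead double is not directly accessible. I would bypass this by first establishing the satellite principle announced in the abstract: a checkable condition on a pair of knots $J,J'$ under which $\Upsilon_{P(J)}=\Upsilon_{P(J')}$ for \emph{every} pattern $P$ of nonzero winding number. The right condition is that $CFK^{\infty}(J)$ and $CFK^{\infty}(J')$ agree after adding acyclic summands; one proves it by analysing the effect of the satellite construction on the knot Floer complex and showing that an acyclic (``box'') summand of the companion's complex produces only an acyclic summand in the satellite's complex when the winding number is nonzero, so that it drops out before one passes to the $t$-modified homology that computes $\Upsilon$. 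Granting this, Hedden's computation of the knot Floer homology of $W$ shows that $CFK^{\infty}(W)$ differs from $CFK^{\infty}(T_{2,3})$ only by acyclic summands, so $W$ and $T_{2,3}$ satisfy the hypothesis and
\[
\Upsilon_{K_{n}}=\Upsilon_{(T_{2,3})_{n,1}}.
\]
Hence the intractable cables of the Whitehead double have exactly the $\Upsilon$ of the corresponding iterated torus knots, whose knot Floer complexes are computable via the cabling formula, $T_{2,3}$ being an L-space knot.

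Next is the partial computation of $\Upsilon_{(T_{2,3})_{n,1}}$. Working from the knot Floer complex of $(T_{2,3})_{n,1}$---computable from the very simple complex of $T_{2,3}$ via the cabling formula for knot Floer homology, or via a cabling formula for $\Upsilon$ directly---I would, for each $n$, locate a point $t_{n}$ with $t_{n}\to 0$ at which the piecewise-linear function $\Upsilon_{(T_{2,3})_{n,1}}$ has a genuine change of slope, and record the sign and size of the jump; a sparse enough choice of indices guarantees that for $m\ne n$ in the chosen subsequence $\Upsilon_{(T_{2,3})_{m,1}}$ is affine near $t_{n}$. Only this much of $\Upsilon$---one singularity per knot, with its slope jump---is needed.

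Finally I would assemble the summand. For each chosen $n$ let $\phi_{n}$ be a suitable rescaling of the map sending a knot to the jump in the slope of its $\Upsilon$ at $t_{n}$; since $\Upsilon$ is a concordance homomorphism and ``slope jump at $t_{n}$'' is a linear functional on piecewise-linear functions, $\phi_{n}$ is a well-defined concordance homomorphism $\mathcal{C}\to\mathbb{R}$, and the computation of the previous paragraph arranges that its restriction to $\mathcal{C}_{\Delta}$ is $\mathbb{Z}$-valued with $\phi_{n}(K_{n})=\pm 1$ and $\phi_{n}(K_{m})=0$ for $m\ne n$. Because $\Upsilon$ of any fixed knot has only finitely many singular points, $\psi(x):=\sum_{n}\phi_{n}(x)\,K_{n}$ is a finite sum, hence a well-defined homomorphism $\mathcal{C}_{\Delta}\to\langle K_{n}\rangle$ restricting to the identity on $\langle K_{n}\rangle$; together with the resulting linear independence of the $K_{n}$ this exhibits $\langle K_{n}\rangle\cong\mathbb{Z}^{\infty}$ as a direct summand of $\mathcal{C}_{\Delta}$. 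I expect the main obstacle to be the satellite principle of the second paragraph---pinning down the effect of a satellite operation on $CFK^{\infty}$ precisely enough to see that acyclic summands are invisible to $\Upsilon$ once the winding number is nonzero; once that is in place, the trefoil-cable computation and the remaining bookkeeping are comparatively routine.
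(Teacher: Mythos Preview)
Your overall plan matches the paper's: same knots $D_{n,1}$, same reduction of $\Upsilon_{D_{n,1}}$ to $\Upsilon_{(T_{2,3})_{n,1}}$ via a satellite principle, same use of slope-jumps of $\Upsilon$ to build the splitting. Two points deserve comment.

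\textbf{The satellite principle.} You propose to prove it by tracking how an acyclic summand of $CFK^\infty(J)$ propagates through a satellite construction and arguing it stays acyclic in $CFK^\infty(P(J))$. The paper does something entirely different and avoids any direct analysis of satellite knot Floer complexes. It reformulates the hypothesis ``$CFK^\infty$ agree up to acyclics'' as $\nu^+$-equivalence, which by Hom--Wu is equivalent to $d(S^3_1(K_1\#-K_2))=d(S^3_1(-K_1\#K_2))=0$. The key observation is then purely topological: for a pattern $Q$ of nonzero winding number with $Q(U)$ slice, the zero-surgeries $S^3_0(K)$ and $S^3_0(Q(K))$ are rationally homology cobordant (Cochran--Franklin--Hedden--Horn), so by Ozsv\'ath--Szab\'o and Levine--Ruberman their $d_{1/2}$-invariants agree, whence $d(S^3_1(K))=d(S^3_1(Q(K)))$. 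A short trick with an auxiliary pattern $Q$ built from $P$ and $K_2$ then shows $P(K_1)$ and $P(K_2)$ are $\nu^+$-equivalent. Your direct approach may well work (and something like it has since been carried out), but the paper's route sidesteps the bordered analysis entirely by passing through $3$-manifold invariants; you correctly flagged this step as the main obstacle, and the paper's resolution of it is the most novel ingredient.

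\textbf{The final assembly.} Your plan to pass to a sparse subsequence so that $\phi_n(K_m)=0$ for all $m\neq n$ is not justified by the partial computation you describe: knowing only the \emph{first} singularity of each $\Upsilon_{K_n}$ tells you nothing about whether $\Upsilon_{K_m}$ (for $m>n$) has a later singularity at $t_n$. The paper does not attempt this. It simply observes that the first singularity of $\Upsilon_{D_{n,1}}$ is at $\tfrac{2}{1+n}$ with jump $1+n$, so the map $\phi([K])=\bigl(\tfrac{1}{1+k}\Delta\Upsilon'_K(\tfrac{2}{1+k})\bigr)_{k\ge 2}$ sends $D_{n,1}$ to $(\ast,\dots,\ast,1,0,0,\dots)$; this lower-triangular-with-ones pattern already makes $\phi|_{\mathcal{C}_\Delta}$ surjective onto $\mathbb{Z}^\infty$, and freeness of $\mathbb{Z}^\infty$ gives the splitting. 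No subsequence is needed. (Also, there is no off-the-shelf cabling formula for $\Upsilon$ to invoke here; the paper computes the needed piece of $\Upsilon_{(T_{2,3})_{n,1}}$ by hand from Hom's description of $\widehat{HFK}(T_{2,3;n,1})$.)
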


Let $D$ be the positively-clasped untwisted Whitehead double of the right-handed trefoil knot, and $K_{p,q}$ be the $(p,q)$-cable of a knot $K$. To prove Theorem \ref{thm:A}, we consider $\{D_{n,1}\}_{n=2}^\infty$ and their Upsilon functions $\Upsilon_{D_{n,1}}$. Before we discuss the computation of $\Upsilon_{D_{n,1}}$, we compare our knots with the knots which are recently known to generate a $\Z^\infty$-summand of $\cC_{T}$. The followings are the sets of topologically slice knots considered by Hom \cite{Hom:2015-3} and Ozsv\'ath, Stipsicz and Szab\'o \cite{Ozsvath-Stipsicz-Szabo:2014-1} respectively: $$HOM:=\{D_{n,n+1}\#-T_{n,n+1}\}_{n=2}^\infty\text{ and }OSS:=\{D_{n,2n-1}\#-T_{n,2n-1}\}_{n=2}^\infty,$$ 
where $T_{p,q}$ is the $(p,q)$-torus knot and $-K$ denotes the mirror image of a knot $K$ with the opposite orientation. 
Observe that the Alexander polynomials of knots in $HOM$ and $OSS$ are non-trivial. Nonetheless, their knots might generate a $\Z^\infty$-summand in $\cC_\Delta$, but it seems hard to determine if they are not concordant to any knots with trivial Alexander polynomial. See \cite{Hedden-Livingston-Ruberman:2012-1} for a method to do such. We also point out that their knots are composite but ours are prime. Moreover, it will be shown in Section \ref{sec:ProofOfTheoremA} that the knots in $OSS$ are linearly independent to a subset of our knots, which also generate a $\Z^\infty$-summand in $\cC_{\Delta}$. 

We are back to discuss our computation of $\Upsilon_{D_{n,1}}$. Instead of computing $\Upsilon_{D_{n,1}}$ directly, we first obtain a condition for two satellite knots to have identical $\Upsilon$, using another concordance invariant from knot Floer homology, $\nu^+$ due to Hom and Wu \cite{Hom-Wu:2014-1}. We define two knots $K_1$ and $K_2$ to be \emph{$\nu^+$-equivalent} if $\nu^+(K_1\#-K_2)=\nu^+(-K_1\#K_2)=0$. It is known that $\nu^+$-equivalent knots have the same $\Upsilon$ functions \cite[Proposition 4.7]{Ozsvath-Stipsicz-Szabo:2014-1}. Here we develop further as follows: 
\begin{theoremalpha}\label{thm:B}Let $P$ be a pattern with nonzero winding number. If two knots $K_1$ and $K_2$ are $\nu^+$-equivalent, then $P(K_1)$ and $P(K_2)$ are $\nu^+$-equivalent, and consequently $\Upsilon_{P(K_1)}\equiv\Upsilon_{P(K_2)}$.
\end{theoremalpha}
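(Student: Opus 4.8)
The plan is to prove the statement about $\nu^+$-equivalence first and then invoke the cited Proposition 4.7 of Ozsv\'ath--Stipsicz--Szab\'o for the $\Upsilon$-conclusion. So the heart of the matter is: if $\nu^+(K_1\#-K_2)=\nu^+(-K_1\#K_2)=0$, then $\nu^+(P(K_1)\#-P(K_2))=\nu^+(-P(K_1)\#P(K_2))=0$ for any pattern $P$ with nonzero winding number $w\neq 0$. The natural route is to recall that $\nu^+(K)=0$ is equivalent to the statement that the large-surgery correction term $d$-invariants of $S^3_N(K)$ agree with those of $S^3_N(\text{unknot})$ in the relevant $\mathrm{Spin}^c$ structures, or, in the language most convenient here, that $K$ is ``$\nu^+$-equivalent to the unknot'', which by work of Hom--Wu and of Ozsv\'ath--Szab\'o can be repackaged via the local equivalence class of $CFK^\infty$ (equivalently, the ``$\epsilon$''/stable equivalence type of the knot Floer complex, or the behavior of the $V_0$-type invariants). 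Thus the first step I would carry out is to restate $\nu^+$-equivalence as: $CFK^\infty(K_1)$ and $CFK^\infty(K_2)$ have the same ``positive'' local equivalence type, i.e. there are maps in both directions inducing isomorphisms on a suitable localization — precisely the property that is detected by $\nu^+$ of the difference in both orders.

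The second and main step is a satellite formula. I would use the fact that for a pattern $P$ with winding number $w$, the knot Floer complex $CFK^\infty(P(K))$ can be computed from $CFK^\infty(K)$ together with the pattern data, and — crucially for us — that the \emph{large-surgery} information governing $\nu^+(P(K))$ depends on $K$ only through a bounded range of the filtered complex. Concretely, $\nu^+(P(K))$ is computed from $d$-invariants of large surgeries on $P(K)$, and by the satellite/cabling-type surgery formulas these large surgeries on $P(K)$ are surgeries on $K$ (with framing scaled by $w^2$) along an auxiliary knot determined by $P$; hence $\nu^+(P(K))$ is a function of the correction-term package of large surgeries on $K$, which is exactly the package that $\nu^+$-equivalence of $K_1,K_2$ forces to agree. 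Equivalently and more cleanly: $\nu^+$-equivalence of $K_1$ and $K_2$ means there is a negative-definite (or local-equivalence) cobordism-type relation between their complexes, and applying the pattern $P$ — a purely ``external'' operation, the same on both sides — preserves this relation because the satellite operation is functorial for the maps witnessing local equivalence, provided $w\neq 0$ so that the pattern sees the companion nontrivially. I would formalize this by showing $P(K_1)\#-P(K_2)$ bounds the appropriate object (is locally equivalent to the unknot), using that $P(K_1\#-K_2)$-type manipulations interact well with $\nu^+$; the nonzero winding number is what guarantees the pattern applied to a slice-in-a-rational-homology-ball companion yields a $\nu^+=0$ satellite, generalizing the classical fact that patterns with nonzero winding number take concordant knots to concordant knots.

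The third step is bookkeeping: $\nu^+$-equivalence is symmetric and one must check both $\nu^+(P(K_1)\#-P(K_2))=0$ and $\nu^+(-P(K_1)\#P(K_2))=0$; but since the hypothesis on $K_1,K_2$ is itself symmetric and $-P(K)=\bar P(-K)$ for the mirrored pattern $\bar P$, which still has nonzero winding number, the second equality follows from the first applied to the mirrored data. Then $\Upsilon_{P(K_1)}\equiv\Upsilon_{P(K_2)}$ is immediate from \cite[Proposition 4.7]{Ozsvath-Stipsicz-Szabo:2014-1}.

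I expect the main obstacle to be Step 2: making precise the sense in which the large-surgery correction terms of $P(K)$ — and hence $\nu^+(P(K))$ — depend on $K$ only through data preserved under $\nu^+$-equivalence. The cleanest available tool is the local-equivalence (or ``$\nu^+$-equivalence as an equivalence relation on $CFK^\infty$'') framework: I would prove a lemma stating that if $CFK^\infty(K_1)$ and $CFK^\infty(K_2)$ are $\nu^+$-equivalent then so are $CFK^\infty(P(K_1))$ and $CFK^\infty(P(K_2))$, by checking that the satellite operation with a fixed nonzero-winding-number pattern sends a $\nu^+$-equivalence (a pair of ``local maps'') to a $\nu^+$-equivalence. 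The technical content is that the maps realizing $\nu^+(K_1\#-K_2)=0$ induce, after applying $P$, maps realizing $\nu^+(P(K_1)\#-P(K_2))=0$; granting the satellite behavior of $\nu^+$ under connected sums and the nonvanishing winding number (which prevents the pattern from collapsing the relevant filtration levels), this reduces to a diagram chase. Everything else is standard once this lemma is in place.
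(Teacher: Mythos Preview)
Your proposal never delivers the key ingredient. Step 2 --- that the satellite operation with nonzero winding number preserves $\nu^+$-equivalence via some functoriality of local maps or a satellite formula for $CFK^\infty$ --- is asserted but not proved, and you yourself flag it as the main obstacle. There is no off-the-shelf theorem of the form ``satellite constructions send local equivalences to local equivalences for patterns with $w\neq 0$'' that you can simply cite; making that precise and proving it is a substantial project, not a diagram chase. As written, your Step 2 is a hope, not an argument.

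The paper sidesteps this entirely with a \emph{topological} argument. First it uses the equivalence $\nu^+(K)=0 \Longleftrightarrow d(S^3_1(K))=0$. The key lemma is: if $Q$ has nonzero winding number and $Q(U)$ is slice, then $d(S^3_1(Q(K)))=d(S^3_1(K))$ for every $K$. This holds because Cochran--Franklin--Hedden--Horn show $S^3_0(K)$ and $S^3_0(Q(K))$ are rationally homology cobordant; Levine--Ruberman show $d_{1/2}$ is invariant under such cobordisms; and Ozsv\'ath--Szab\'o relate $d_{1/2}(S^3_0(K))$ to $d(S^3_1(K))$. To feed $P(K_1)\#-P(K_2)$ into this lemma, the paper builds from $P$ and $K_2$ a specific pattern $Q$ (essentially $P$ with a copy of $K_2$ tied in, connect-summed with $-P(K_2)$) so that $Q(U)=P(K_2)\#-P(K_2)$ is slice and $Q(K_1\#-K_2)$ is concordant to $P(K_1)\#-P(K_2)$. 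The lemma then gives $d(S^3_1(P(K_1)\#-P(K_2)))=d(S^3_1(K_1\#-K_2))=0$, hence $\nu^+(P(K_1)\#-P(K_2))=0$; the other direction is symmetric. No analysis of $CFK^\infty$ of satellites is required anywhere.
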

 The proof of Theorem \ref{thm:B} hinges on the results of Cochran-Franklin-Hedden-Horn \cite{Cochran-Franklin-Hedden-Horn:2013-1}, Levine-Ruberman \cite{Levine-Ruberman:2014-1} and Ozsv\'{a}th-Szab\'{o} \cite{Ozsvath-Szabo:2003-2}. 
 
 In fact $D$ and $T_{2,3}$ are $\nu^+$-equivalent. See Example \ref{ex:D}. Therefore it is enough to compute  $\Upsilon_{T_{2,3;n,1}}$ for $\Upsilon_{D_{n,1}}$.  In Section \ref{sec:computation} we compute a part of $\Upsilon_{T_{2,3;n,1}}$ by understanding the infinity-version of the knot Floer chain complex of $T_{2,3;n,1}$ using the hat-version of knot Floer chain complex of $T_{2,3;n,1}$ given by Hom \cite{Hom:2015-1}.
 


\begin{theoremalpha}\label{thm:C} Let $T_{2,3;n,1}$ be the $(n,1)$-cable of the right-handed trefoil knot. Then 
	$$\Upsilon_{T_{2,3;n,1}}(t)=\left\{\begin{array}{lll}
	-nt&\text{if}&t\leq\frac{2}{1+n}\\
	t-2&\text{if}&\frac{2}{1+n}\leq t\leq\frac{2}{1+n}+\epsilon,
	\end{array}\right.$$
	for some small $\epsilon>0$. See Figure \ref{fig:Upsilon}.
\end{theoremalpha}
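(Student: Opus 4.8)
The plan is to compute $\Upsilon_{T_{2,3;n,1}}$ directly from the knot Floer chain complex. Recall that $\Upsilon_K$ depends only on the filtered chain homotopy type of $CFK^\infty(K)$: following Ozsv\'ath, Stipsicz and Szab\'o \cite{Ozsvath-Stipsicz-Szabo:2014-1} (in the reformulation through the interpolated filtration), for $t\in[0,2]$ the value $\Upsilon_K(t)$ equals $-2$ times the minimum, taken over all cycles $z\in CFK^\infty(K)$ representing the generator of $HF^\infty$ in Maslov grading $0$, of $\mathcal F_t(z):=\tfrac t2\,\mathcal F_{\mathrm{Alex}}(z)+(1-\tfrac t2)\,\mathcal F_{\mathrm{alg}}(z)$. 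So I need to determine $CFK^\infty(T_{2,3;n,1})$ --- or at least the part relevant for $t$ in the stated interval --- and then carry out this minimization.

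The input is Hom's computation of $\widehat{CFK}(T_{2,3;n,1})$ via bordered Floer homology \cite{Hom:2015-1}, which records the underlying $\mathbb F$-module, the Alexander filtration, and the vertical (Alexander-filtration-lowering) part of the differential. From it I would read off that $T_{2,3;n,1}$ is fibered of genus $n$, so $\widehat{HFK}$ is one-dimensional in top Alexander grading $n$, generated by a cycle $x_n$, and that $\tau(T_{2,3;n,1})=n$, in agreement with Hom's cabling formula for $\tau$; I would also record the generators and vertical arrows in the lowest Alexander gradings, singling out the Alexander-grading-$(-1)$ generator that will control the second linear piece. Since $T_{2,3;n,1}$ is not an $L$-space knot for $n\ge2$, $\widehat{CFK}$ is not a staircase; I would split off its acyclic square summands right away.

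The crucial step is to upgrade $\widehat{CFK}$ to enough of $CFK^\infty$, i.e.\ to recover the horizontal (algebraic-filtration-lowering) differentials that the hat flavor forgets. Here I would use the symmetry of $CFK^\infty(K)$ that interchanges its two filtrations (and acts on the generators through the conjugation symmetry $\widehat{HFK}_m(K,s)\cong\widehat{HFK}_{m-2s}(K,-s)$): it forces the horizontal differentials to mirror the vertical ones read off from Hom's computation. Whatever ambiguity remains --- longer ``diagonal'' differentials, or the position of the reduced summand relative to the squares --- should be cut down by the Maslov gradings together with $H_*(CFK^\infty)\cong\mathbb F[U,U^{-1}]$, and I expect this to determine the portion of the complex needed below. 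The square summands, being filtered-split, do not affect $\Upsilon$, so one may work with the single reduced summand carrying $HF^\infty$. I expect this passage from the hat flavor to the bifiltered infinity complex to be the main obstacle: $\widehat{CFK}$ genuinely discards the horizontal differentials, and one must verify that no long differential perturbs the filtration levels used in the last step.

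It then remains to run the minimization. The generator $x_n$ sits at bifiltration $(\mathcal F_{\mathrm{alg}},\mathcal F_{\mathrm{Alex}})=(0,n)$ and is a cycle representing the Maslov-grading-$0$ generator of $HF^\infty$, contributing $\mathcal F_t(x_n)=\tfrac{nt}{2}$. A second cycle in the same homology class --- assembled from the low-Alexander-grading generators, with the Alexander-grading-$(-1)$ generator as its maximal term, so that (after multiplication by $U^{-1}$) this term sits at bifiltration $(1,0)$ --- contributes $1-\tfrac t2$. Since $\tfrac{nt}{2}\le 1-\tfrac t2$ exactly when $t\le\tfrac2{n+1}$, the minimum over these two representatives is $\tfrac{nt}{2}$ on $[0,\tfrac2{n+1}]$ and $1-\tfrac t2$ just beyond it, so $\Upsilon_{T_{2,3;n,1}}(t)=-nt$ there and then $t-2$. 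The last thing to check is that no other cycle representative of the class attains a strictly smaller $\mathcal F_t$ on $[0,\tfrac2{n+1}+\epsilon]$; past that interval other representatives (ultimately the one at bifiltration $(n,0)$ dictated by the symmetry) take over, which is exactly why the statement is asserted only on the initial interval.
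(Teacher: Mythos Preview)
Your strategy is essentially the paper's: start from Hom's $\widehat{HFK}(T_{2,3;n,1})$, constrain the remaining (horizontal and diagonal) components of $\partial$ enough to locate two Maslov-grading-$0$ cycles representing the generator of $HF^\infty$---one at bifiltration $(0,n)$ and one with maximal term at $(1,0)$---and then run Livingston's half-plane minimization. The paper's two cycles are exactly your $x_n$ (called $y_n$ there) and $y_1'+\sum_i a_i y_i$, and the first singularity at $t=\tfrac{2}{n+1}$ falls out just as you describe.

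Two points where your outline diverges from what the paper actually does and where you should be careful. First, you propose to recover the horizontal differentials from the vertical ones via the $i\leftrightarrow j$ symmetry of $CFK^\infty$. That symmetry is only a filtered chain homotopy equivalence, not an automorphism of your chosen model, so it does not literally tell you the horizontal arrows in the same basis in which you know the vertical ones. The paper does not invoke symmetry; instead it pins down $\partial_H$ and $\partial_D$ by Maslov grading, filtration, and $\partial^2=0$ (its Lemma~4.2), together with the observation that $y_n$ must be hit horizontally because the horizontal homology is $\mathbb F$ in the right place.

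Second, your plan to ``split off the acyclic square summands right away'' and work with a single reduced staircase summand is exactly the step the paper cannot take. There are potential diagonal differentials---in the paper's notation $\partial y_1'=\sum_i a_i w_i$ and $\partial x_n=y_n+y_1'+\sum_i b_i y_i$ with unknown $a_i,b_i$---that may link the would-be staircase to the ``squares,'' so a filtered direct-sum splitting need not exist. The paper sidesteps this: it shows (Lemma~4.3) that $\partial^2 x_n=0$ forces $a_i=b_i$, hence $y_1'+\sum_i a_i y_i$ is a cycle homologous to $y_n$, and then argues by a direct diagram chase that this class is nonzero in $H_*(CFK^\infty)$. Your ``assembled from the low-Alexander-grading generators'' cycle is precisely this element; you will need the analogue of that argument, not a splitting, to justify both that it is a cycle in the right class and that no representative with strictly smaller $\mathcal F_t$ exists on $[0,\tfrac{2}{n+1}+\epsilon]$.
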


For \emph{thin} knots (see Definition \ref{def:thin}), we have the following Corollary of Theorem \ref{thm:B} (see Example \ref{ex:thin}).
\begin{corollaryalpha}\label{cor:D}Suppose $K_1$ and $K_2$ are thin knots. If $\tau(K_1)=\tau(K_2)$, then $\Upsilon_{P(K_1)}\equiv \Upsilon_{P(K_2)}$ for any pattern $P$ with non-zero winding number.
\end{corollaryalpha}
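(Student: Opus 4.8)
The plan is to derive Corollary~\ref{cor:D} from Theorem~\ref{thm:B}. By that theorem it suffices to show that two thin knots $K_1$ and $K_2$ with $\tau(K_1)=\tau(K_2)$ are $\nu^+$-equivalent, i.e.\ that $\nu^+(K_1\#-K_2)=\nu^+(-K_1\#K_2)=0$. The first observation is that both of these knots are themselves thin: the mirror of a thin knot is thin (mirroring negates the $\delta$-grading but preserves the support on a single diagonal), and a connected sum of thin knots is thin since $\widehat{HFK}$ of a connected sum is the tensor product of the factors' knot Floer homologies and so remains diagonally supported. Moreover, since $\tau$ is a concordance homomorphism with $\tau(-K)=-\tau(K)$, we have $\tau(K_1\#-K_2)=\tau(K_1)-\tau(K_2)=0$ and likewise $\tau(-K_1\#K_2)=0$.

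So the whole statement reduces to the claim: a thin knot $J$ with $\tau(J)=0$ satisfies $\nu^+(J)=0$. Here I would invoke the structure theorem for $CFK^\infty$ of thin knots (due to Petkova): $CFK^\infty(J)$ is filtered chain homotopy equivalent to the direct sum of a single staircase complex $St(\tau(J))$ and finitely many ``box'' (square) summands. When $\tau(J)=0$ the staircase is a single generator, so $CFK^\infty(J)$ is chain homotopy equivalent to $CFK^\infty$ of the unknot plus an acyclic complex; since $\nu^+$ is computed from $CFK^\infty$ via the truncated complexes $A^+_s$ and is unaffected by acyclic summands (box summands remain acyclic in every $A^+_s$), we get $\nu^+(J)=\nu^+(\text{unknot})=0$. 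This is what Example~\ref{ex:thin} records; equivalently, one just notes $V_s(J)=0$ for all $s\ge 0$ when $J$ is thin with $\tau(J)=0$.

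Combining the two steps, $\nu^+(K_1\#-K_2)=\nu^+(-K_1\#K_2)=0$, so $K_1$ and $K_2$ are $\nu^+$-equivalent, and Theorem~\ref{thm:B} gives $\Upsilon_{P(K_1)}\equiv\Upsilon_{P(K_2)}$ for every pattern $P$ with nonzero winding number. The only non-formal ingredient is the identification of $\nu^+$ of a thin knot with $\max\{0,\tau\}$ (here only the $\tau=0$ case is needed), which rests on the staircase-plus-boxes decomposition of $CFK^\infty$ and on the fact that box summands contribute nothing to $\nu^+$; everything else is bookkeeping with $\tau$ and with the definition of $\nu^+$-equivalence.
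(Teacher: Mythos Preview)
Your argument is correct, and at its core uses the same ingredients as the paper: Petkova's staircase-plus-boxes description of $CFK^\infty$ for thin knots, Lemma~\ref{lem:Hom} (or the equivalent observation that acyclic summands do not affect $\nu^+$), and Theorem~\ref{thm:B}. The organization, however, differs slightly from the paper's.

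The paper (via Example~\ref{ex:thin}) shows that any thin knot $K$ with $\tau(K)=\pm k$ is $\nu^+$-equivalent to $T_{2,\pm(2k+1)}$; since $\tau(K_1)=\tau(K_2)$, both $K_1$ and $K_2$ are $\nu^+$-equivalent to the same torus knot, and transitivity of $\nu^+$-equivalence finishes the argument. Your route instead passes to the difference $K_1\#-K_2$ and argues it is itself thin with $\tau=0$, hence $\nu^+$-equivalent to the unknot. This is a perfectly valid shortcut, but it requires the additional (easy) facts that mirrors and connected sums of thin knots are thin, which you correctly justify via the K\"unneth formula for $\widehat{HFK}$. The paper's approach sidesteps this by never forming connected sums of thin knots directly; it only needs Petkova's result applied to $K_1$ and $K_2$ individually. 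In exchange, your version avoids introducing the auxiliary torus knots and appealing to transitivity. Both arguments are short and rest on the same structural input.
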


\begin{figure}
	\centering
	\begin{tikzpicture}[xscale=2.5, yscale=1.5]
	\draw[->] (0,0) -- (1.8,0) node[right] {$t$};
	\draw[->] (0,0) -- (0,-3.8) node[below] {$\Upsilon(t)$};
	
	\foreach \i in {2,3,4,5}   
	\draw[scale=2,domain=0:2/(1+\i),smooth,variable=\x,black] plot ({\x},{-\i*\x});
	
	\foreach \i in {2,3,4,5}  
	{\draw[scale=2,domain=2/(1+\i):2/(1+\i)+0.05,smooth,variable=\x,black] plot ({\x},{-2+\x});
		\draw[dotted, scale=2,domain=2/(1+\i):2/(1+\i)+0.1,smooth,variable=\x,black] plot ({\x},{-2+\x});}
	
	\draw[dashed] (4/5,-16/5)--(4/5,0) node [above] {\tiny{$\frac{2}{1+n}$}};
	\draw[dashed] (4/5,-16/5) -- (0,-16/5) node[left]  {\tiny{$\frac{-2n}{1+n}$}};
	
	\draw[dotted, thick] (1/2,-2.7)--(1/7,-2.7);
	
	\draw (1.5,-2.7) node {\tiny{$n=2$}};
	\draw (1.2,-3) node {\tiny{$n=3$}};
	\draw (1,-3.2) node {\tiny{$n=4$}};
	\draw (0.8,-3.4) node {\tiny{$n=5$}};
	
	\end{tikzpicture}
	\caption{The graphs of $\Upsilon_{T_{2,3;n,1}}\equiv\Upsilon_{D_{n,1}}$ for $n=2,\cdots,5$.}
	\label{fig:Upsilon}
\end{figure}
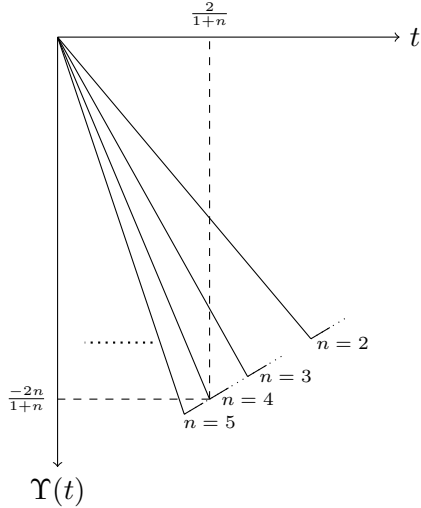
We conclude this section presenting questions naturally arisen during this work. 
\begin{question}
	Are  $\{D_{n,1}\}$ and the Hom's knots linearly independent in $\cC$?
\end{question}
As mentioned above, it follows from comparing $\Upsilon$ of our knots with Ozsv\'ath-Stipsicz-Szab\'o's that they are linearly independent in $\cC$. See the end of Section \ref{sec:ProofOfTheoremA}. However, by using Theorem \ref{thm:B} and a recent result of Wang \cite{Wang:2016-1}, it is easy to see that the first singularities of the $\Upsilon$ functions of the Hom's knots occur at the same $t$'s as ours. Therefore, the part of $\Upsilon$ we computed cannot determine the linear independence in $\cC$ of ours and Hom's.

\begin{question}Are the Hom's knots or the Ozsv\'ath-Stipsicz-Szab\'o's knots not concordant to any knots with trivial Alexander polynomial? Do they generate a $\Z^\infty$-summand in $\cC_T/\cC_\Delta$?
\end{question}
As long as the authors know, the only known method to answer the question is to use the obstruction of Hedden-Livingston-Ruberman \cite{Hedden-Livingston-Ruberman:2012-1} based on Heegaard Floer correction terms. Matt Hedden pointed out that, using the obstruction, one can show $D_{2,3}\#-T_{2,3}$ (Hom and Ozsv\'ath-Stipsicz-Szab\'o's knots for $n=2$) is not concordant to any knot with trivial Alexander polynomial. However it might need much more work to show those knots represent the nontrivial elements in $\cC_T/\cC_\Delta$.

\begin{question}\label{question:UpsilonAndCable}
	What is the behavior of $\Upsilon$ under the cabling operation? Is $\Upsilon_{K_{p,q}}$ determined by some invariants of $K$?
\end{question}
The behavior of $\tau$-invariant (the negative slope of $\Upsilon$ at $t=0$) under the cabling operation has been studied intensively. For example, see  \cite{Hedden:2009-1, VanCott:2010-1, Petkova:2013-1, Hom:2014-2}. In particular $\tau(K_{p,q})$ is determined by $\tau(K)$, $\epsilon(K)$, $p$ and $q$ \cite{Hom:2014-2}. Therefore one can expect analogous results for the $\Upsilon$-invariant. However the results for $\tau$ have relied on various computation techniques for the hat-version (simple version) of knot Floer homology. Even if we luckily compute a part of $\Upsilon$ for some specific knots using the hat-version of knot Floer homology, one might need better computation techniques of the full knot Floer chain complex, to answer this question.

\begin{remark}
	Preparing for this article, Chen informed us a partial answer for Question 3 \cite{Chen:2016-1}. In particular he obtained an inequality for $\Upsilon_{K_{p,q}}$ at $t\in[0,\frac{2}{p}]$ in terms of $\Upsilon_K$, $p$ and $q$, which is analogous to Van Cott's \cite{VanCott:2010-1} in the case of $\tau$.
\end{remark}

\begin{question}
	Is there a $\Z_2^\infty$-subgroup or a $\Z_2^\infty$-summand in $\cC_\Delta$?
\end{question}
As we mentioned above, it is known that there exists a $\Z_2^\infty$-subgroup in $\cC^T/\cC_{\Delta}$ \cite{Hedden-Kim-Livingston:2016-1} using Heegaard Floer correction terms.  Therefore, a natural question to ask is if there exists a $\Z_2^\infty$-subgroup in $\cC_\Delta$, or a $\Z_2^\infty$-summand in $\cC^T$, $\cC_\Delta$ or $\cC^T/\cC_{\Delta}$. However many concordance invariants such as $\tau$ and $\Upsilon$ do not work effectively to detect torsion elements in $\cC$.

\begin{acknowledgement}
We wish to thank Matt Hedden for providing comments on the earlier version of this paper and Wenzhao Chen for his interest in this article and sharing his partial answer for Question \ref{question:UpsilonAndCable}.
\end{acknowledgement}

\section{Background on Heegaard Floer homology}
In this section we briefly recall the Heegaard Floer homology for closed oriented 3-manifolds and the knot Floer homology for knots in $S^3$, and various smooth knot concordance invariants coming from them, and introduce some properties of those invariants. The purpose of this section is to set up notations and collect the background materials, which will be used later in the following sections. We refer the readers to a recent survey paper of Hom \cite{Hom:2015-2} for the more detailed expositions on this subject.

\subsection{Heegaard Floer homology and Knot Floer homology}
Let $Y$ be a closed oriented 3-manifold and $\spinct$ be a spin$^c$ structure over $Y$. Heegaard Floer homology, introduced by Ozsv\'ath and Szab\'o \cite{Ozsvath-Szabo:2004-1}, associates to $(Y,\spinct)$ a relatively graded chain complex $CF^\infty(Y,\spinct)$, a freely and finitely generated module over the Laurent polynomial $\mathbb{F}[U,U^{-1}]$, where $\mathbb{F}:=\Z/2\Z$. We call the homological grading of the chain complex the \emph{Maslov grading}, and the multiplication of $U$ lowers the grading by 2. 

From the definition of the boundary map, the negative power of $U$ naturally induces a filtration on the chain complex, called the \emph{algebraic filtration}. The filtered chain homotopy type of $CF^\infty(Y,\spinct)$ is an invariant of $(Y,\spinct)$. Hence, the homology of $CF^\infty(Y,\spinct)$, denoted by $HF^\infty(Y,\spinct)$, is an invariant of $(Y,\mathfrak{t})$, called \emph{Heegaard Floer homology} of $(Y,\mathfrak{t})$. The algebraic filtration of $CF^\infty$ allows us to define various versions of Heegaard Floer homology groups as follows: 
$$HF^-:=H_*(CF^\infty\{i<0\}), HF^+:=H_*(CF^\infty\{i\geq0\}) \text{ and }\hat{HF}:=H_*(CF^\infty\{i=0\}),$$
where  $CS$ denotes the sub or quotient complex of $C$ generated by the elements in the filtration levels in $S$. 

Ozsv\'ath and Szab\'o \cite{Ozsvath-Szabo:2004-2} and independently Rasmussen \cite{Rasmussen:2003-1}, discovered a knot $K$ in $S^3$ induces an additional filtration on a Heegaard Floer chain complex of $S^3$. We call the induced filtration the \emph{Alexander filtration}. Together with the algebraic filtration, the ${\Z\oplus\Z}$-filtered chain homotopy type of the chain complex is an invariant of $K$, denoted by $CFK^\infty(K)$ and called the \emph{full knot Floer chain complex}. The multiplication of $U$ lowers Alexander and Algebraic filtration by 1 in $CFK^\infty$. We usually use $i$ for the algebraic filtration level and $j$ for the Alexander filtration level of $CFK^\infty$, as describing a subset of the ${\Z\oplus\Z}$-filtration level. The homology of the associated graded chain complex of $CFK^\infty\{i=0\}$ is denoted by $\hat{HFK}(K)$, i.e.
$$\hat{HFK}(K):=\bigoplus_{k\in\Z}H_*(C\{i=0,j=k\}).$$
We define \emph{$\delta$-grading} by the Maslov grading subtracted by Alexander filtration.
\begin{definition}\label{def:thin}
	A knot $K\subset S^3$ is called \emph{thin} if $\hat{HFK}(K)$ is supported in a single $\delta$-grading.
\end{definition}

It is convenient to depict a knot Floer chain complex as dots and arrows in an $(i,j)$-plane, in which a dot at $(i,j)$-coordinate indicates an $\mathbb{F}$-generator of the chain complex at the filtration level $(i,j)$, and an arrow stands for a  non-trivial component of the ending dot in the differential of the starting dot. Note that an arrow starting from a dot in $(k,l)$-coordinate always maps to a dot in $CFK^\infty\{i\leq k,j\leq l\}$.

\subsection{Ozsv\'{a}th-Szab\'{o}'s correction terms}
For a closed oriented 3-manifold $Y$ with a torsion spin$^c$-structure $\spinct$ over $Y$, the relative Maslov grading of $CF^\infty(Y,\spinct)$ can be lifted to an absolute $\Q$-grading \cite{Ozsvath-Szabo:2003-2}.

Let 
$\cT^{+}:=\mathbb{F}[U,U^{-1}]/U\mathbb{F}[U]$. We usually call a free part in $HF^+(Y,\spinct)$ a $\cT^+$-tower. If $Y$ is a rational homology 3-sphere, it is known that the rank of $HF^+(Y,\spinct)$ as $\cT^{+}$-module is one. 
 We define the \emph{correction term}, or \emph{$d$-invariant}, of $(Y,\spinct)$ by the lowest absolute grading of the $\cT^+$-tower in $HF^+(Y,\spinct)$. 
 
If $H_1(Y;\Z)=\Z$ and $\spinct_0$ is the torsion $\textrm{spin}^c$-structure over $Y$, then it is known that the $\cT^+$-rank of $HF^
+(Y,\spinct_0)$ is two and the absolute grading of each $\cT^+$-tower is supported in $2\Z+1/2$ and  $2\Z-1/2$ respectively. We define $d_{\pm\frac{1}{2}}(Y,\spinct_0)$ to be the lowest absolute grading in each tower corresponding to the grading $2\Z\pm1/2$ respectively. 
\begin{example}It is known that $HF^+(S^3)=\mathcal{T}^+_{(0)}$, $\text{and }HF^+(S^2\times S^1,\spinct_0)=\cT^+_{(\frac{1}{2})}\oplus \cT^+_{(-\frac{1}{2})}$ \cite{Ozsvath-Szabo:2003-1},
where the numbers in the parenthesis indicate the lowest absolute Maslov grading of each tower $\cT^+$. Hence $d(S^3)=0$, and $d_{\pm\frac{1}{2}}(S^1\times S^2,\spinct_0)=\pm \frac{1}{2}$.
\end{example}

We recall a special case of \cite[Proposition 4.12]{Ozsvath-Szabo:2003-2} for the later purpose. For a knot $K\subset S^3$ and $n\in \Z$, let $S^3_{n}(K)$ be the 3-manifold obtained by the $n$-surgery of $S^3$ along $K$.
\begin{proposition}[{\cite[Proposition 4.12]{Ozsvath-Szabo:2003-2}}]\label{lem:Ozsvath-Szabo} For a knot $K\subset S^3$, $d_{\frac{1}{2}}(S^3_0(K),\spinct_0)=\tfrac{1}{2}+d(S^3_{1}(K))$.
\end{proposition}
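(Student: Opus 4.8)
The plan is to read off both correction terms from the surgery exact triangle relating $S^3$, $S^3_0(K)$ and $S^3_1(K)$. I would use the triangle for the triad of slopes $\{\infty,0,1\}$ on $K$,
$$\cdots\longrightarrow HF^+(S^3)\longrightarrow HF^+(S^3_0(K))\longrightarrow HF^+(S^3_1(K))\longrightarrow\cdots,$$
whose three maps are the cobordism maps of the two-handle cobordisms of the triad (summed over spin$^c$ structures). Since the correction terms depend only on the $\cT^+$-towers, and in $HF^+(S^3_0(K))$ the only spin$^c$ structure carrying a tower is the torsion one $\spinct_0$, it is enough to follow how the triangle meets the summand $(S^3_0(K),\spinct_0)$: there $HF^+$ has two towers with bottom gradings $d_{\pm\frac12}\in 2\Z\pm\tfrac12$, while $HF^+(S^3)$ and $HF^+(S^3_1(K))$ each have a single tower, with bottoms $0$ and $d(S^3_1(K))$.

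Next I would compute the degrees of the maps from the grading-shift formula $\Delta\mathrm{gr}(F^\circ_{W,\mathfrak{s}})=\tfrac14\bigl(c_1(\mathfrak{s})^2-2\chi(W)-3\sigma(W)\bigr)$. The cobordism $S^3\to S^3_0(K)$ is the $0$-framed two-handle along $K$ ($\chi=1$, $\sigma=0$, and $c_1^2=0$ on the spin$^c$ structure over $\spinct_0$), and the cobordism $S^3_0(K)\to S^3_1(K)$ is again a negative-semidefinite two-handle with $c_1^2=0$; both shift the Maslov grading by $-\tfrac12$. Combining this with exactness and the elementary fact that a graded, $U$-equivariant map out of a single $\cT^+$-tower that vanishes in high degrees vanishes identically, one gets the qualitative picture: the cobordism $S^3_0(K)\to S^3_1(K)$ annihilates one of the two towers of $HF^+(S^3_0(K),\spinct_0)$ and carries the other isomorphically in high degrees onto the tower of $HF^+(S^3_1(K))$, and a parity count on the degrees identifies the latter with $\cT^+_{(d_{\frac12})}$. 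Feeding this into Ozsv\'ath--Szab\'o's correction-term inequality for negative-semidefinite cobordisms already gives $d_{\frac12}(S^3_0(K),\spinct_0)\le\tfrac12+d(S^3_1(K))$.

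The hard part is to promote this to an equality. The difficulty is that the isomorphism above is so far only known in high degrees, and \emph{a priori} the $U$-torsion summands of $HF^+(S^3_0(K),\spinct_0)$ and of $HF^+(S^3_1(K))$ might let it begin a few steps above the bottom $d(S^3_1(K))$. To rule this out I would pass to the triangle of $HF^\infty$-groups, which are standard ($\mathbb F[U,U^{-1}]$ or $\mathbb F[U,U^{-1}]^{\oplus 2}$), so that there the maps are honest graded $\mathbb F[U,U^{-1}]$-isomorphisms of the computed degrees; then I would use naturality of the canonical maps $\pi\colon HF^\infty\to HF^+$, whose images are precisely the towers truncated at the $d$-invariants, together with the precise description in \cite{Ozsvath-Szabo:2003-2} of the two-handle cobordism maps and of their contributing spin$^c$ structures, to conclude that the generator of the relevant $HF^\infty$-summand that $\pi$ maps to the bottom of $\cT^+_{(d_{\frac12})}$ is carried, through the degree-$(-\tfrac12)$ isomorphism, onto the generator that $\pi$ maps to the bottom of $\cT^+_{(d(S^3_1(K)))}$. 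Comparing degrees then gives $d_{\frac12}(S^3_0(K),\spinct_0)=\tfrac12+d(S^3_1(K))$. (Alternatively, one can avoid the triangle and compute both $d_{\frac12}(S^3_0(K),\spinct_0)$ and $d(S^3_1(K))$ directly from the full knot complex $CFK^\infty(K)$, where each is manifestly the same quantity extracted from the vertical truncations of $CFK^\infty(K)$.)
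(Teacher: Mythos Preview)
The paper does not prove this statement; it is simply quoted as a special case of \cite[Proposition~4.12]{Ozsvath-Szabo:2003-2} and used as a black box, so there is no ``paper's own proof'' to compare against.

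Your sketch is broadly in the spirit of the argument in the cited source: Ozsv\'ath and Szab\'o do use the integer-surgery long exact sequence, follow the $\cT^+$-towers through it, and appeal to the structure of $HF^\infty$ together with the absolute grading formula to upgrade the inequality to an equality. One concrete detail is off, however. The two-handle cobordism in the $\{\infty,0,1\}$ triad joining $S^3_0(K)$ and $S^3_1(K)$ is \emph{not} ``negative-semidefinite with $c_1^2=0$'': its intersection form is the nondegenerate form of rank one, so every spin$^c$ structure has $c_1$ an odd multiple of the generator and $c_1^2$ is never zero. You should redo that grading-shift computation (and be careful about which direction the relevant map in the triangle actually points and which spin$^c$ structures restrict to $\spinct_0$). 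Once that is corrected the parity bookkeeping and the rest of your outline go through essentially as in the original. The alternative you mention at the end---reading both correction terms directly off $CFK^\infty(K)$ via the large-surgery or mapping-cone description---is also standard and is arguably the cleaner route.
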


The correction term provides a spin$^c$ rational homology cobordism invariant for spin$^c$ 3-manifolds with \emph{standard} $HF^\infty$ (which is true if $b_1\leq 2$). See \cite[Theorem 1.2]{Ozsvath-Szabo:2003-2} for the case of rational homology 3-spheres. For 3-manifolds with $H_1\cong\Z$, this can be obtained from \cite[Theorem 9.15]{Ozsvath-Szabo:2003-2}. In \cite{Levine-Ruberman:2014-1}, Levine and Ruberman generalize the correction terms to $d(Y,\spinct,V)$ for any 3-manifold $Y$ with standard $HF^\infty$, a torsion $\textrm{spin}^c$-structure $\spinct$ over $Y$ and  a subspace $V\subset H_1(Y;\Z)$. They showed the rational corbordism invariance of the generalized correction terms in \cite[Proposition 4.5]{Levine-Ruberman:2014-1}. 
\begin{remark}\label{rem:Levine-Ruberman}If $H_1(Y;\Z)=\Z$ and $\spinct_0$ is the torsion $\textrm{spin}^c$-structure over $Y$, then it is immediate from the definitions that $d_{\frac{1}{2}}(Y,\spinct_0)=d(Y,\spinct_0,0)$.
\end{remark}
\begin{proposition}[{\cite[Theorem 9.15]{Ozsvath-Szabo:2003-2}, \cite[Proposition 4.5]{Levine-Ruberman:2014-1}}]\label{lem:Levine-Ruberman}Suppose that $Y_i$ is a closed, oriented \textup{3}-manifold such that $H_1(Y_i;\Z)\cong\Z$ for $i=1,2$. Denote the torsion spin$^c$-structure on $Y_i$ by $\spinct_i$. Suppose that there exists a spin$^c$-rational homology cobordism $(W,\mathfrak{s})$ from $(Y_1,\spinct_1)$ to $(Y_2,\spinct_2)$. Then, $d_{\frac{1}{2}}(Y_1,\spinct_1)=d_{\frac{1}{2}}(Y_2,\spinct_2)$.
\end{proposition}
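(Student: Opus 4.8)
The plan is to deduce the statement by combining the rational homology cobordism invariance of the Levine--Ruberman correction terms with the identification recorded in Remark \ref{rem:Levine-Ruberman}. First I would observe that, since $H_1(Y_i;\Z)\cong\Z$, we have $b_1(Y_i)=1\leq 2$, so each $Y_i$ has standard $HF^\infty$ and the generalized correction term $d(Y_i,\spinct_i,V)$ of \cite{Levine-Ruberman:2014-1} is defined for every subspace $V\subseteq H_1(Y_i;\Z)$; in particular $d(Y_i,\spinct_i,0)$ makes sense, and by Remark \ref{rem:Levine-Ruberman} it equals $d_{\frac{1}{2}}(Y_i,\spinct_i)$. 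Hence it suffices to prove $d(Y_1,\spinct_1,0)=d(Y_2,\spinct_2,0)$.

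Next I would feed the given spin$^c$-rational homology cobordism $(W,\mathfrak{s})$ into \cite[Proposition 4.5]{Levine-Ruberman:2014-1}. Since $W$ is a $\Q$-homology cobordism, the inclusions $Y_i\hookrightarrow W$ induce isomorphisms $H_1(Y_i;\Q)\cong H_1(W;\Q)$; in particular the zero subspaces $0\subseteq H_1(Y_i;\Z)$ have the same image in $H_1(W;\Q)$, which is precisely the compatibility needed to compare the $V$-flavored correction terms along $W$. Applying \cite[Proposition 4.5]{Levine-Ruberman:2014-1} to the subspaces $V_1=0$ and $V_2=0$ then gives $d(Y_1,\spinct_1,0)=d(Y_2,\spinct_2,0)$, which by the previous paragraph is the desired equality. (The sub-case in which $W$ is assembled from one- and two-handles only is already contained in \cite[Theorem 9.15]{Ozsvath-Szabo:2003-2}; one could alternatively reduce the general case to that by handle trading, but invoking the Levine--Ruberman invariance directly is cleaner.)

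The only genuine point to check is that the hypotheses of \cite[Proposition 4.5]{Levine-Ruberman:2014-1} are actually satisfied here: that both ends have standard $HF^\infty$ (immediate from $b_1=1$), that the spin$^c$ structure $\mathfrak{s}$ on $W$ restricts to the torsion classes $\spinct_i$ on the two boundary components (built into the definition of a spin$^c$-rational homology cobordism), and that the chosen subspaces are $W$-compatible in the precise sense required there, which for $V_i=0$ is automatic. I would also be careful about the orientation convention, so that a cobordism ``from $(Y_1,\spinct_1)$ to $(Y_2,\spinct_2)$'' is exactly the one for which \cite[Proposition 4.5]{Levine-Ruberman:2014-1} yields an equality; if their statement is phrased as an inequality in one direction only, I would apply it to both $W$ and its orientation reversal to obtain the two inequalities and hence the equality.
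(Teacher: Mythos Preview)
Your proposal is correct and follows essentially the same approach as the paper: the paper's proof is a one-liner observing that, by Remark \ref{rem:Levine-Ruberman}, the statement is the special case $V=V'=0$ of \cite[Proposition 4.5]{Levine-Ruberman:2014-1}. Your additional verification of the hypotheses (standard $HF^\infty$ from $b_1\leq 2$, compatibility of the zero subspaces, orientation conventions) is more thorough than what the paper records but does not constitute a different route.
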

\begin{proof}By Remark \ref{rem:Levine-Ruberman}, this is a special case of \cite[Proposition 4.5]{Levine-Ruberman:2014-1} when $V=V'=0$.
\end{proof}
\subsection{Concordance invariants from the knot Floer homology}

Let $K$ be a knot in $S^3$ and $C$ be the full knot Floer chain complex of $K$, $CFK^\infty(K)$. The concordance invariants $\tau$, $\nu^+$ and $\Upsilon$ are induced from the natural maps between sub or quotient complexes of $C$. We use the coordinate $(i,j)$ again for the (algebraic, Alexander)-filtration level of $C$.

Consider the sequence of the inclusion maps between the quotient complexes of $C$:
$$\iota_k:C\{i=0,j\leq k\}\rightarrow C\{i=0\}.$$
Then $\tau(K)$ is defined by the minimum of $k$ such that $\iota_k$ induces a nontrivial map on the homology \cite{Ozsvath-Szabo:2003-1}.

The invariant $\nu^+$ is defined in a similar manner. This time we consider the following sequence of projection maps, 
$$v^+_k:C\{\max\{i,j-k\}\geq0\}\rightarrow C\{i\leq 0\}.$$
Then we define $\nu^+$ of $K$ by the minimum of $k$ such that $(v^+_k)_*(1)=1$, where $1$ denotes the lowest graded generator of the $\cT^+$-tower in the homology of each complex \cite{Hom-Wu:2014-1}.



Instead of the original definition of the \emph{Upsilon invariant}, denoted by $\Upsilon$, in \cite{Ozsvath-Stipsicz-Szabo:2014-1}, we use the alternative definition due to Livingston \cite{Livingston:2015}. Fix a real number $t$ in $[0,2]$. Consider the following 1-parameter family of inclusion maps 
$$d_s:C\{j\leq\tfrac{2}{t}{s}+(1-\tfrac{2}{t})i\}\rightarrow C.$$
The Upsilon invariant of $K$ at $t$, $\Upsilon_K(t)$, is defined by the minimum of $-2s$ such that the image of $(d_s)_*$ contains the nontrivial element of grading 0. The $\Upsilon$ maps a knot $K$ to a continuous piecewise-linear function on $[0,2]$ such that $\Upsilon_K(0)=\Upsilon_K(2)=0$ and $\Upsilon_K(t)=\Upsilon_K(2-t)$.

We recall some properties of $\tau$, $\nu^+$ and $\Upsilon$ that will be used in this paper.
\begin{proposition}\label{prop:HFinvariants} Let $K$ be a knot. The invariants $\tau$, $\nu^+$ and $\Upsilon$ satisfy the following properties\textup{:}
\begin{enumerate}
	\item \cite{Ozsvath-Szabo:2004-1, Hom-Wu:2014-1, Ozsvath-Stipsicz-Szabo:2014-1} The invariants $\tau$, $\nu^+$ and $\Upsilon$ are smooth concordance invariants. In particular, $\tau$ and $\Upsilon$ induce group homomorphisms from $\cC$ to $\Z$ and $Cont([0,2])$ respectively. 
	\item \cite[Proposition 4.7]{Ozsvath-Stipsicz-Szabo:2014-1} $|\Upsilon_K(t)|\leq t\max(\nu^+(K),\nu^+(-K))$ for any $t\in[0,2]$.
	\item \cite[Proposition 1.6]{Ozsvath-Stipsicz-Szabo:2014-1} $\Upsilon'_K(0)=-\tau(K)$.
	\item \cite[Proposition 2.3 (2)]{Hom-Wu:2014-1} $\nu^+(K)=0$ if and only if $d(S^3_1(K))=0$.
\end{enumerate}
\end{proposition}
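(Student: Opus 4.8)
All four items are established in the cited references, so the plan is to assemble those proofs around a single organizing principle and to isolate the one external input that does the real work.

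For part (1) I would first note that $\tau$, $\nu^+$ and $\Upsilon$ are each extracted solely from the $\Z\oplus\Z$-filtered chain homotopy type of $CFK^\infty(K)$, via the induced maps $\iota_k$, $v^+_k$ and $d_s$ on the associated sub- and quotient complexes. A smooth concordance from $K_0$ to $K_1$ yields a filtered chain homotopy equivalence between $CFK^\infty(K_0)$ and $CFK^\infty(K_1)$ (up to acyclic summands), under which these induced maps match on homology; hence all three invariants are unchanged, giving smooth concordance invariance. For the homomorphism statements I would invoke the Künneth formula $CFK^\infty(K_1\#K_2)\simeq CFK^\infty(K_1)\otimes_{\mathbb{F}[U,U^{-1}]}CFK^\infty(K_2)$ as filtered complexes: additivity of the Alexander filtration under the tensor product gives $\tau(K_1\#K_2)=\tau(K_1)+\tau(K_2)$, and the same analysis applied to the family $d_s$ on the tensor product gives $\Upsilon_{K_1\#K_2}\equiv\Upsilon_{K_1}+\Upsilon_{K_2}$, as in \cite{Ozsvath-Szabo:2004-1,Ozsvath-Stipsicz-Szabo:2014-1}.

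For parts (2) and (3) I would argue directly from the defining families. For the bound in (2), the idea is to compare the inclusions $d_s$ that compute $\Upsilon_K(t)$ with the projections $v^+_k$ that compute $\nu^+(K)$ and $\nu^+(-K)$: controlling the $(i,j)$-location of a cycle representing the grading-$0$ class of $C$ by $\nu^+$ bounds the smallest admissible $-2s$ and yields $|\Upsilon_K(t)|\le t\max(\nu^+(K),\nu^+(-K))$, as in \cite[Proposition 4.7]{Ozsvath-Stipsicz-Szabo:2014-1}. For (3), I would examine the region $\{j\le \tfrac{2}{t}s+(1-\tfrac{2}{t})i\}$ as $t\to 0^+$: its boundary line grows arbitrarily steep, so the constraint is asymptotically governed by the Alexander filtration alone, and the minimal $-2s$ reaching grading $0$ behaves to first order like $-t\,\tau(K)$; reading off the leading coefficient gives $\Upsilon'_K(0)=-\tau(K)$, as in \cite[Proposition 1.6]{Ozsvath-Stipsicz-Szabo:2014-1}.

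For part (4) I would pass to the large-surgery picture. Writing $A^+_k=C\{\max(i,j-k)\ge 0\}$ and letting $V_k\ge 0$ be the integer for which $v^+_k$ acts on the $\cT^+$-tower as multiplication by $U^{V_k}$, the sequence $(V_k)$ is non-increasing and $\nu^+(K)=\min\{k:V_k=0\}$, so $\nu^+(K)=0$ exactly when $V_0=0$. The Ozsv\'ath--Szab\'o mapping-cone formula for $+1$-surgery then gives $d(S^3_1(K))=-2V_0(K)$, and combining these two facts yields $\nu^+(K)=0\iff d(S^3_1(K))=0$, which is \cite[Proposition 2.3(2)]{Hom-Wu:2014-1}. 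The step I expect to carry the real weight is this last one: unlike the others it does not follow from the formal features of $CFK^\infty$ but rests on the full integer-surgery mapping-cone machinery, so a self-contained account would have to import that formula. The remaining delicate point is the filtered (not merely graded) compatibility of the Künneth equivalence used in part (1), which is precisely where the homomorphism property is genuinely earned.
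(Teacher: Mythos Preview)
Your sketches are correct and, in fact, considerably more detailed than what the paper itself provides: the paper offers no proof of this proposition at all. It is stated purely as a collection of cited results, with the sole addition being the sentence immediately following it, which notes that part (4) is stated in \cite{Hom-Wu:2014-1} as ``$\nu^+(K)=0$ if and only if $V_0(K)=0$'' and that $V_0$ is equivalent to $-\tfrac{1}{2}d(S^3_1(K))$. Your treatment of (4) via the large-surgery/mapping-cone identification $d(S^3_1(K))=-2V_0(K)$ is exactly this translation, so on the one point where the paper says anything, you match it.
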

Note that Proposition \ref{prop:HFinvariants} (4) is originally stated as $\nu^+(K)=0$ if and only if $V_0(K)=0$ in \cite{Hom-Wu:2014-1}. The invariant $V_0$ is another concordance invariant equivalent to $-\frac{1}{2}d(S^3_1(K))$.

\section{$\nu^+$-equivalence and proof of Theorem \ref{thm:B}}

We define two knots $K_1$ and $K_2$ to be \emph{$\nu^+$-equivalent} if $$\nu^+(K_1\#-K_2)=\nu^+(-K_1\#K_2)=0.$$ In this section, we give an equivalent condition of $\nu^+$-equivalence and examples of $\nu^+$-equivalent knots
, and prove Theorem \ref{thm:B}. The following lemma is implicit in \cite{Hom:2015-2}.
\begin{lemma}[{\cite[Proposition 3.11]{Hom:2015-2}}]\label{lem:Hom} Suppose $K_1$ and $K_2$ are two knots in $S^3$. Then, $K_1$ and $K_2$ are $\nu^+$-equivalent if and only if there is a filtered chain homotopy equivalence $CFK^\infty(K_1)\oplus A_1\simeq CFK^\infty(K_2)\oplus A_2$ for some acyclic complexes $A_1$ and $A_2$. 
\end{lemma}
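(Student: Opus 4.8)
The plan is to translate both implications into statements about the $\Z\oplus\Z$-filtered $\F[U,U^{-1}]$-module $CFK^\infty$, and the whole argument rests on one genuinely Floer-theoretic input: a chain-level reformulation of the vanishing of $\nu^+$. First I would record that reformulation. Unwinding the definition of $\nu^+$ via the maps $v^+_k$, and using Proposition~\ref{prop:HFinvariants}(4) together with the Hom--Wu surgery formula, one shows that for any knot $J$ the condition $\nu^+(J)=0$ is equivalent to the existence of a $\Z\oplus\Z$-filtered, Maslov-grading preserving quasi-isomorphism $\F[U,U^{-1}]\to CFK^\infty(J)$, while $\nu^+(-J)=0$ is equivalent to the existence of such a quasi-isomorphism $CFK^\infty(J)\to\F[U,U^{-1}]$ in the opposite direction. (Call such maps \emph{local}.) Concretely, a local map to or from $\F[U,U^{-1}]$ is the same as a Maslov-grading-$0$ cycle lying in a prescribed quadrant of the $(i,j)$-plane whose homology class generates $H_*(CFK^\infty(J))\cong\F[U,U^{-1}]$.

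Next I would collect two pieces of homological algebra. By the Künneth theorem for knot Floer homology together with the identification of the mirror with the dual complex, $CFK^\infty(K_1\#-K_2)\simeq CFK^\infty(K_1)\otimes_{\F[U,U^{-1}]}CFK^\infty(K_2)^{*}$. And for any knot $K$, writing $C=CFK^\infty(K)$, the evaluation $\mathrm{ev}\colon C^{*}\otimes C\to\F[U,U^{-1}]$ and the coevaluation $\mathrm{coev}\colon\F[U,U^{-1}]\to C^{*}\otimes C$, $1\mapsto\mathrm{id}_C$, are filtered, grading preserving chain maps with $\mathrm{ev}\circ\mathrm{coev}=\mathrm{id}$, because $\mathrm{ev}(\mathrm{coev}(1))=\mathrm{tr}(\mathrm{id}_C)=\bigl(\mathrm{rank}_{\F[U,U^{-1}]}C\bigr)\cdot 1=\bigl(\dim_{\F}\widehat{HFK}(K)\bigr)\cdot 1=1$, using $\dim_{\F}\widehat{HFK}(K)\equiv\Delta_K(1)=1\pmod 2$. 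Hence the honest idempotent $\mathrm{coev}\circ\mathrm{ev}$ splits $C^{*}\otimes C$ as a filtered complex and $C^{*}\otimes C\cong\F[U,U^{-1}]\oplus A$ for an acyclic $A$, and likewise for $C\otimes C^{*}$.

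Granting the reformulation and these two facts, both implications become formal. Suppose first that $K_1$ and $K_2$ are $\nu^+$-equivalent, and set $D=CFK^\infty(K_1)\otimes CFK^\infty(K_2)^{*}\simeq CFK^\infty(K_1\#-K_2)$. Then $\nu^+(K_1\#-K_2)=0$ and $\nu^+(-K_1\#K_2)=\nu^+(-(K_1\#-K_2))=0$ furnish, by the reformulation, local maps $\F[U,U^{-1}]\to D$ and $D\to\F[U,U^{-1}]$; their composite $\F[U,U^{-1}]\to\F[U,U^{-1}]$ is a filtered grading preserving quasi-isomorphism, hence equals $\mathrm{id}$, so, splitting off the image of the first map exactly as for the coevaluation, $D\cong\F[U,U^{-1}]\oplus A_0$ with $A_0$ acyclic. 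Tensoring with $CFK^\infty(K_2)$, using associativity and the splitting $CFK^\infty(K_2)^{*}\otimes CFK^\infty(K_2)\cong\F[U,U^{-1}]\oplus A_0'$, gives filtered isomorphisms
\[
CFK^\infty(K_1)\oplus\bigl(CFK^\infty(K_1)\otimes A_0'\bigr)\cong D\otimes CFK^\infty(K_2)\cong CFK^\infty(K_2)\oplus\bigl(A_0\otimes CFK^\infty(K_2)\bigr),
\]
whose flanking summands are acyclic since tensoring a chain-contractible complex with anything is chain-contractible; this is the desired equivalence. Conversely, given a filtered equivalence $CFK^\infty(K_1)\oplus A_1\simeq CFK^\infty(K_2)\oplus A_2$ with the $A_i$ acyclic, tensoring with $CFK^\infty(K_2)^{*}$ and applying the evaluation--coevaluation splitting turns it into $CFK^\infty(K_1\#-K_2)\oplus(\text{acyclic})\simeq\F[U,U^{-1}]\oplus(\text{acyclic})$; composing with the obvious inclusions and projections produces local maps $\F[U,U^{-1}]\to CFK^\infty(K_1\#-K_2)$ and $CFK^\infty(K_1\#-K_2)\to\F[U,U^{-1}]$, whence both $\nu^+(K_1\#-K_2)=0$ and $\nu^+(-K_1\#K_2)=0$ by the reformulation.

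The hard part will be the reformulation in the first step: one must read off from the definition of $\nu^+$ and the Hom--Wu surgery formula the precise statement that $\nu^+(J)=0$ and $\nu^+(-J)=0$ correspond to local maps in the two \emph{opposite} directions between $CFK^\infty(J)$ and $\F[U,U^{-1}]$, with the correct orientation, while keeping track of the algebraic and the Alexander filtrations simultaneously. The remaining steps are routine manipulations of filtered chain complexes.
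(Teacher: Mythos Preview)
Your argument is correct and essentially complete; the ``reformulation'' you flag as the hard step is indeed the well-known chain-level characterisation of $\nu^+=0$ underlying Hom's stable-equivalence and the later local-equivalence formalism, so it is a legitimate input. The trace computation $\mathrm{ev}\circ\mathrm{coev}=\dim_{\mathbb F}\widehat{HFK}(K)\equiv\Delta_K(1)=1$ over $\mathbb F_2$ is exactly what makes $\mathrm{coev}\circ\mathrm{ev}$ an honest filtered idempotent, and the splitting of $D$ via the filtered idempotent $f\circ g$ goes through because over the PID $\mathbb F[U,U^{-1}]$ an acyclic complex of free modules is contractible (you use this implicitly when you say the flanking tensor factors are acyclic).

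Your route differs from the paper's in emphasis rather than in substance. For the forward direction the paper simply invokes Hom's theorem that concordant knots have stably equivalent $CFK^\infty$ and observes that her proof only uses $\nu^+(K_1\#-K_2)=0$; you instead unpack that proof, producing the local maps explicitly and splitting $D$ by hand. For the reverse direction both arguments tensor with $CFK^\infty(-K_2)$, but the paper then appeals to Hom's theorem again (applied to $K_2\#-K_2\sim U$) together with Peters' observation that $d(S^3_1(K))$ depends only on the summand of $CFK^\infty(K)$ carrying the generator of homology, concluding $d(S^3_1(K_1\#-K_2))=d(S^3_1(U))=0$ and invoking Proposition~\ref{prop:HFinvariants}(4). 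Your version replaces the Peters step with the local-map reformulation directly. The paper's argument is shorter because it outsources the work to citations; yours is more self-contained and makes transparent that the whole lemma is a statement about the ``local equivalence'' class of $CFK^\infty$, at the cost of having to justify the reformulation.
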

\begin{proof}In \cite[Theorem 1]{Hom:2015-2}, Hom proved that if $K_1$ and $K_2$ are concordant, then $CFK^\infty(K_1)\oplus A_1\simeq CFK^\infty(K_2)\oplus A_2$ for some acyclic complexes $A_1$ and $A_2$. The same proof works under the weaker assumption that $K_1$ and $K_2$ are $\nu^+$-equivalent. In Hom's proof, the assumption (that $K_1$ and $K_2$ are concordant) is used just once to argue that $\nu^+(K_1\#-K_2)=0$. Compare \cite[Proposition 3.11]{Hom:2015-2}.

For the opposite direction, assume that $CFK^\infty(K_1)\oplus A_1\simeq CFK^\infty(K_2)\oplus A_2$ for some acyclic $A_1$ and $A_2$. By tensoring both sides by $CFK^\infty(-K_2)$, we obtain that
\[CFK^\infty(K_1\#-K_2)\oplus A_1'\simeq CFK^\infty(K_2\#-K_2)\oplus A_2'\simeq CFK^\infty(U)\oplus A_2'',
\]
where the last filtered chain homotopy equivlalence follows from \cite[Theorem 1]{Hom:2015-2}. 

As pointed out in \cite[Section 5.5]{Peters:2010-1}, $d(S^3_1(K))$ can be obtained  by the direct summand containing the generator of $H_*(CFK^\infty(K))$. 
Therefore, $d(S^3_1(K_1\#-K_2))=d(S^3_1(U))=0$. By Proposition \ref{prop:HFinvariants} (4), $\nu^+(K_1\#-K_2)=0$. The same argument shows that $\nu^+(-K_1\#K_2)=0$ and hence $K_1$ and $K_2$ are $\nu^+$-equivalent. This completes the proof.
\end{proof}

\begin{example}\label{ex:D}
	In \cite[Proposition 6.1]{Hedden-Kim-Livingston:2016-1}, it is shown that $CFK^\infty(D)\simeq CFK^\infty(T_{2,3})\oplus A$ for an acyclic summand $A$. Therefore, $D$ and $T_{2,3}$ are $\nu^+$-equivalent. Moreover, $\#_{i=1}^kD$ is $\nu^+$-equivalent to $T_{2,2k+1}$ by \cite[Theorem B.1]{Hedden-Kim-Livingston:2016-1}.
\end{example}

\begin{example}\label{ex:thin} Suppose that $K$ is a thin knot such that $\tau(K)=\pm k$ for $k\geq 0$.
By Petkova \cite[Section 3.1]{Petkova:2013-1} and Lemma \ref{lem:Hom}, $K$ is $\nu^+$-equivalent to $T_{2,\pm (2k+1)}$.
\end{example}

Note that it directly follows from Proposition \ref{prop:HFinvariants} (1) and (2) that $\Upsilon_{K_1}\equiv\Upsilon_{K_2}$ for two $\nu^+$-equivalent knots $K_1$ and $K_2$. We can develop further so that the $\Upsilon$ of some satellite knots of them are also identical. More precisely,
\begin{theoremB}Let $P$ be a pattern with nonzero winding number. If two knots $K_1$ and $K_2$ are $\nu^+$-equivalent, then $P(K_1)$ and $P(K_2)$ are $\nu^+$-equivalent, and consequently $\Upsilon_{P(K_1)}\equiv\Upsilon_{P(K_2)}$.
\end{theoremB}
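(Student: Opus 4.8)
The plan is to prove the $\nu^+$-equivalence of $P(K_1)$ and $P(K_2)$; once that is known, $\Upsilon_{P(K_1)}\equiv\Upsilon_{P(K_2)}$ is immediate from Proposition~\ref{prop:HFinvariants}(1) and~(2), exactly as recorded in the paragraph preceding the restatement. By definition, $P(K_1)$ and $P(K_2)$ are $\nu^+$-equivalent if and only if $\nu^+(P(K_1)\#-P(K_2))=0=\nu^+(-P(K_1)\#P(K_2))$, and by Proposition~\ref{prop:HFinvariants}(4) together with Proposition~\ref{lem:Ozsvath-Szabo} this amounts to $d_{1/2}(S^3_0(P(K_1)\#-P(K_2)))=\tfrac12$ and $d_{-1/2}(S^3_0(P(K_1)\#-P(K_2)))=-\tfrac12$; in other words, $P(K_1)\#-P(K_2)$ should be $\nu^+$-equivalent to the unknot. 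Since passing to mirrors interchanges the two conditions (together with the roles of $K_1$, $K_2$, and of the pattern with its mirror), it is enough to prove the $d_{1/2}$ statement, uniformly for a pattern and its mirror.

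Next I would reduce to the case of an unknotted companion. Put $J:=K_1\#-K_2$, which by hypothesis is $\nu^+$-equivalent to the unknot. The standard ``changing the companion'' move realizes the passage from $P(K_2)$ to $P(K_1)$ as an infection along a single curve of linking number $\pm w\neq0$ with companion $K_1\#-K_2$; equivalently, there is a pattern $Q$ of winding number $\pm w\neq0$ with $Q(J)=P(K_1)$ and $Q(U)=P(K_2)$. Taking the interior connected sum of $Q$ with the fixed knot $-Q(U)$ produces a pattern $R$, again of winding number $\pm w\neq0$, with $R(L)=Q(L)\#-Q(U)$ for every companion $L$; thus $R(J)=P(K_1)\#-P(K_2)$, while $R(U)=Q(U)\#-Q(U)$ is slice. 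Since $\nu^+$-equivalence is preserved by connected sum (immediate from subadditivity of $\nu^+$ and $\nu^+\ge0$), the problem is reduced to the following statement: \emph{for every pattern $R$ of nonzero winding number with $R(U)$ slice and every knot $J$ that is $\nu^+$-equivalent to the unknot, $d_{1/2}(S^3_0(R(J)))=\tfrac12$}---applied to $(R,J)$ and to the mirrors $(\overline R,\overline J)$, this gives $d_{\pm1/2}(S^3_0(R(J)))=\pm\tfrac12$.

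For the main step, observe first that $R(U)$, being slice, is concordant to the unknot, hence $\nu^+$-equivalent to it, so $d_{1/2}(S^3_0(R(U)))=\tfrac12$ by Propositions~\ref{prop:HFinvariants}(4) and~\ref{lem:Ozsvath-Szabo}. On the other hand $S^3_0(R(J))$ is obtained from $S^3_0(R(U))$ by infecting along the pattern axis $\eta$---a curve with $\lk(\eta,R(U))=\pm w\neq0$---with companion $J$: one deletes a tubular neighbourhood of $\eta$ and glues in the exterior $E(J)$. Infecting the product cobordism $S^3_0(R(U))\times I$ along $\eta\times I$---that is, deleting $\nu(\eta)\times I$ and gluing in a cobordism from $E(J)$ to $E(U)=S^1\times D^2$, relative to the common boundary torus, assembled from the crossing-change (blow-up/blow-down) cobordisms that unknot $J$---produces a cobordism $W$ from $S^3_0(R(J))$ to $S^3_0(R(U))$ carrying the torsion spin$^c$-structures. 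Following Cochran-Franklin-Hedden-Horn, I would then argue that because $\nu^+(J)=\nu^+(-J)=0$---equivalently, by Lemma~\ref{lem:Hom}, because $CFK^\infty(J)\oplus A\simeq CFK^\infty(U)\oplus A'$ for some acyclic $A,A'$, so that all the $V_s$- and $H_s$-invariants of $J$ vanish and the correction terms of every surgery on $J$ coincide with those of the unknot---the correction-term contributions along $W$ cancel, so that $W$ is, or can be replaced by, a spin$^c$ rational homology cobordism. Proposition~\ref{lem:Levine-Ruberman} then yields $d_{1/2}(S^3_0(R(J)))=d_{1/2}(S^3_0(R(U)))=\tfrac12$, completing the proof.

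The hard part is this last step: one must carry out the satellite/infection cobordism construction of Cochran-Franklin-Hedden-Horn and check that $\nu^+$-triviality of the companion $J$---rather than the much stronger condition that $J$ be slice, or concordant to the unknot---already forces the correction-term change across $W$ to vanish, so that the rational-cobordism invariance of the (generalized) correction terms of Levine-Ruberman (Proposition~\ref{lem:Levine-Ruberman}) and the correction-term results of Ozsv\'ath-Szab\'o can be brought to bear. The two preliminary reductions are formal.
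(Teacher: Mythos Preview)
Your preliminary reductions are correct and essentially match the paper: the paper also constructs a pattern with nonzero winding number sending the unknot to a slice knot and sending $J:=K_1\#-K_2$ to (something concordant to) $P(K_1)\#-P(K_2)$. The divergence is in the final step, and there your argument has a genuine gap.

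You try to compare $S^3_0(R(J))$ with $S^3_0(R(U))$ by building a cobordism $W$ out of crossing-change cobordisms that unknot $J$, and then hope that $\nu^+$-triviality of $J$ forces $W$ to preserve $d_{1/2}$. But this is not what Cochran--Franklin--Hedden--Horn proves, and there is no evident mechanism by which the vanishing of $V_s(J)$ and $H_s(J)$ controls the effect of your $W$ on correction terms of the \emph{satellite} zero-surgery. Your $W$ is generally not a rational homology cobordism, and the ``correction-term contributions cancel'' assertion is unjustified.

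The paper's key observation is that CFHH gives a rational homology cobordism in the \emph{other} direction: for \emph{any} knot $K$ and any pattern $R$ of nonzero winding number with $R(U)$ slice, $S^3_0(R(K))$ and $S^3_0(K)$ are rationally homology cobordant (the slice disk for $R(U)$ is what produces the cobordism). No hypothesis on $K$ is needed here. Combining this with Propositions~\ref{lem:Ozsvath-Szabo} and~\ref{lem:Levine-Ruberman} yields $d(S^3_1(R(K)))=d(S^3_1(K))$ for every $K$; this is the paper's Lemma~\ref{Lemma:satellite-d}. Now apply it to $K=J$: the $\nu^+$-equivalence hypothesis enters only at the very end, via $d(S^3_1(J))=0$ from Proposition~\ref{prop:HFinvariants}(4). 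So the right rational cobordism is between $S^3_0(R(J))$ and $S^3_0(J)$, not between $S^3_0(R(J))$ and $S^3_0(R(U))$; once you make that switch the proof is immediate.
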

We will first prove Theorem \ref{thm:B} assuming the following Lemma which hinges on the results of Cochran-Franklin-Hedden-Horn \cite{Cochran-Franklin-Hedden-Horn:2013-1} and Levine-Ruberman \cite{Levine-Ruberman:2014-1}. 
\begin{lemma}\label{Lemma:satellite-d} Suppose that $Q$ is a pattern with non-zero winding number such that $Q(U)$ is slice where $U$ is the unknot. Then, $d(S^3_1(Q(K))=d(S^3_1(K))$ for any knot $K$. 
\end{lemma}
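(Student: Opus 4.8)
The plan is to transport the statement to zero-surgeries, build there a spin$^c$ rational homology cobordism out of a slice disk for $Q(U)$ together with the fact that $Q$ has nonzero winding number, and then invoke the rational cobordism invariance of the generalized correction term.

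First I would reduce to a statement about $0$-surgeries. Applying Proposition \ref{lem:Ozsvath-Szabo} to the knots $Q(K)$ and $K$ gives
\[
d(S^3_1(Q(K))) = d_{\tfrac12}(S^3_0(Q(K)),\spinct_0)-\tfrac12 \qquad\text{and}\qquad d(S^3_1(K)) = d_{\tfrac12}(S^3_0(K),\spinct_0)-\tfrac12,
\]
where $\spinct_0$ denotes the unique torsion spin$^c$-structure (recall $H_1(S^3_0(J);\Z)\cong\Z$ for every knot $J$). Hence it suffices to show $d_{\tfrac12}(S^3_0(Q(K)),\spinct_0)=d_{\tfrac12}(S^3_0(K),\spinct_0)$, and by Proposition \ref{lem:Levine-Ruberman} this follows as soon as one produces a spin$^c$ rational homology cobordism between $(S^3_0(Q(K)),\spinct_0)$ and $(S^3_0(K),\spinct_0)$; since the torsion spin$^c$-structures have vanishing first Chern class, extending them over such a cobordism is routine, so the real content lies in the underlying oriented rational homology cobordism.

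To build that cobordism I would use the standard decomposition of the zero-surgery of a satellite. Writing $E_J=S^3\setminus\nu(J)$ for the exterior of a knot $J$, one has $S^3_0(Q(K))=P_Q\cup_{T^2}E_K$, where $P_Q$ is a compact $3$-manifold with torus boundary that depends only on the pattern $Q$, and where the gluing identifies the meridian and Seifert longitude of $K$ with fixed curves on $\partial P_Q$, independently of $K$; in particular $S^3_0(Q(U))=P_Q\cup_{T^2}E_U$ with $E_U$ a solid torus, while, with $S_0$ the $0$-surgery solid torus, $S^3_0(K)=S_0\cup_{T^2}E_K$ and $S^3_0(U)=S_0\cup_{T^2}E_U=S^1\times S^2$. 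A slice disk for $Q(U)$ exhibits $S^3_0(Q(U))$ as the boundary of a homology $S^1\times D^3$; feeding this, together with the hypothesis $w\neq 0$ on the winding number of $Q$, into the machinery of Cochran--Franklin--Hedden--Horn \cite{Cochran-Franklin-Hedden-Horn:2013-1}, one obtains a rational homology cobordism from $P_Q$ to $S_0$ rel the torus boundary. Gluing $E_K\times[0,1]$ to it along $T^2\times[0,1]$ then yields the desired spin$^c$ rational homology cobordism from $S^3_0(Q(K))$ to $S^3_0(K)$, and Proposition \ref{lem:Levine-Ruberman} completes the argument. The nonzero winding number enters precisely here: the meridian of $K$ equals $w$ times a generator of $H_1(S^3_0(Q(K));\Z)$ but generates $H_1(S^3_0(K);\Z)$, so in general only a \emph{rational} homology cobordism can be expected (an integral one when $w=\pm1$), which is exactly why the rational homology cobordism invariance of the generalized correction terms of Levine--Ruberman \cite{Levine-Ruberman:2014-1} (Proposition \ref{lem:Levine-Ruberman}) is the right tool rather than its integral counterpart.

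The main obstacle is this middle step: extracting from the slice disk of $Q(U)$ a four-dimensional cobordism that is a product along the solid-torus summand $E_U$, so that deleting that product and inserting $E_K\times[0,1]$ is transparent at the level of (rational) homology. This is precisely the surgery- and homology-cobordism bookkeeping carried out in \cite{Cochran-Franklin-Hedden-Horn:2013-1} for patterns of nonzero winding number, which I would quote rather than reprove; the reduction via Proposition \ref{lem:Ozsvath-Szabo}, the spin$^c$ bookkeeping, and the final appeal to Proposition \ref{lem:Levine-Ruberman} are then formal.
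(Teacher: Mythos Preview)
Your proposal is correct and follows essentially the same route as the paper: reduce to $d_{1/2}$ of zero-surgeries via Proposition~\ref{lem:Ozsvath-Szabo}, invoke \cite{Cochran-Franklin-Hedden-Horn:2013-1} to obtain a rational homology cobordism between $S^3_0(K)$ and $S^3_0(Q(K))$, equip it with a spin$^c$-structure restricting to the torsion ones, and apply Proposition~\ref{lem:Levine-Ruberman}. The paper simply cites \cite[Theorem~2.1]{Cochran-Franklin-Hedden-Horn:2013-1} as a black box for the cobordism between the closed zero-surgeries, whereas you unpack a relative version (a cobordism from $P_Q$ to $S_0$ rel torus, then glue $E_K\times[0,1]$); this is a reasonable gloss on what underlies that theorem, though strictly speaking you only need to quote its conclusion. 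For the spin$^c$ extension you call it ``routine''; the paper spells this out by noting that a rational homology cobordism has trivial intersection form, so every class in $H^2(W;\Z)$ is characteristic and one may choose $\mathfrak{s}$ with $c_1(\mathfrak{s})=0$, which then restricts to the torsion spin$^c$-structures on the ends.
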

\begin{proof}[Proof of Theorem \ref{thm:B}] Suppose that $P$ is a pattern with non-zero winding number and  $K_1$ and $K_2$ are $\nu^+$-equivalent knots. By Proposition \ref{prop:HFinvariants} (4), $K_1$ and $K_2$ are $\nu^+$-equivalent if and only if $d(S^3_1(K_1\#-K_2))=d(S^3_1(-K_1\#K_2))=0$.

As in \cite[Figure 5.2]{Cochran-Davis-Ray:2014-1}, let $Q$ be the pattern described in Figure \ref{fig:pattern}. (Here, $\overline{K_2}$ is the mirror image of $K_2$.) Note that $Q(U)=P(K_2)\#-P(K_2)$ is slice and 
\[Q(K_1\#-K_2)=P(K_1\#K_2\#-K_2)\#-P(K_2).\]
Since $K_2\#-K_2$ is slice, $Q(K_1\#-K_2)$ is concordant to $P(K_1)\#-P(K_2)$. (Recall that if $K$ and $K'$ are concordant, then $P(K)$ is concordant to $P(K')$ for any pattern $P$.) The winding number of $Q$ is equal to that of $P$ and hence non-zero. By applying Lemma \ref{Lemma:satellite-d} to $K=K_1\#-K_2$, we can conclude that 
\[ d(S^3_1(P(K_1)\#-P(K_2)))=d(S^3_1(K_1\#-K_2))=0.\]
The last equality follows from the assumption that $K_1$ and $K_2$ are $\nu^+$-equivalent and by Proposition \ref{prop:HFinvariants} (4). The same argument shows that $d(S^3_1(-P(K_1)\#P(K_2)))=0$. Therefore, $P(K_1)$ and $P(K_2)$ are $\nu^+$-equivalent. It follows from Proposition \ref{prop:HFinvariants} (2) that $\Upsilon_{P(K_1)}\equiv \Upsilon_{P(K_2)}$.
\end{proof}

\begin{figure}[htb]
     \centering
\includegraphics[scale=0.8]{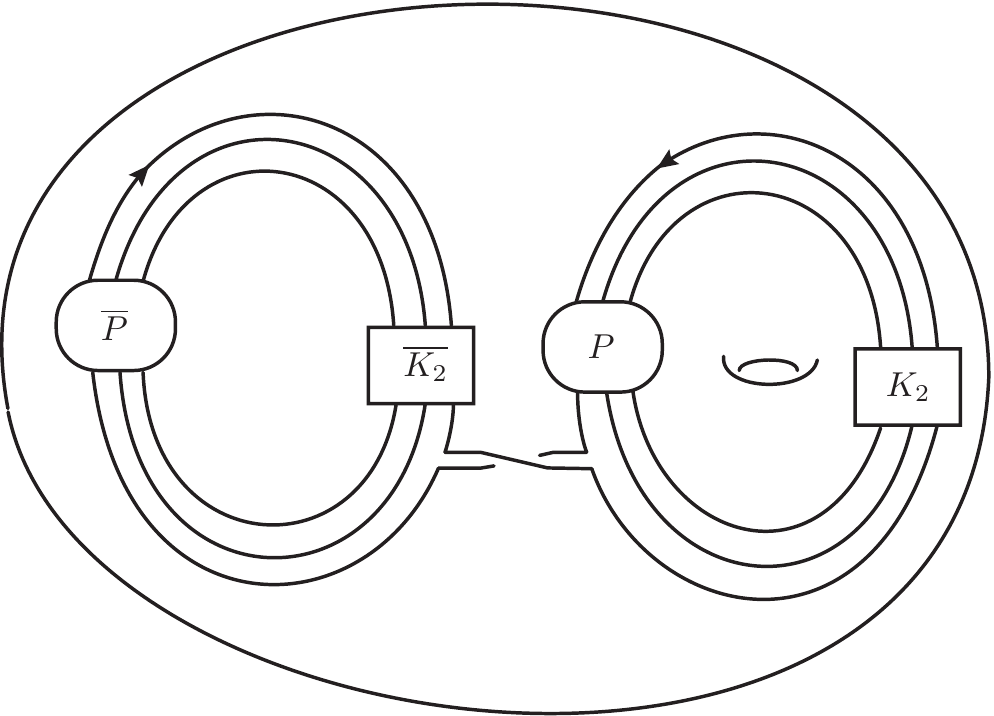}
\caption{The pattern $Q$ in the proof of Theorem \ref{thm:B}.}
\label{fig:pattern}
\end{figure}Now, we give a proof of Lemma \ref{Lemma:satellite-d}.
\begin{proof}[Proof of Lemma \ref{Lemma:satellite-d}] Suppose $Q$ is a pattern with non-zero winding number such that $Q(U)$ is slice. It follows from \cite[Theorem 2.1]{Cochran-Franklin-Hedden-Horn:2013-1} that $S^3_0(K)$ and $S^3_0(Q(K))$ are \emph{rationally homology cobordant}. That is, there exists a compact 4-manifold $W$ such that 
\begin{enumerate}
\item $\d W= S^3_0(Q(K))\sqcup - S^3_0(K)$.
\item $H_*(W;S^3_0(J);\Q)=0$ for $J=K$ and $Q(K)$.
\end{enumerate}
Since $W$ is a rational homology cobordism, the intersection form of $W$ is trivial and every second cohomology class in $W$ is characteristic. In particular, there is a $\operatorname{spin}^c$-structure $\mathfrak{s}$ on $W$ such that $c_1(\mathfrak{s})=0$. The restriction of $\mathfrak{s}$ to the boundary components are their unique torsion spin$^c$-structures. We can apply Propositions \ref{lem:Ozsvath-Szabo} and \ref{lem:Levine-Ruberman} to conclude that $d(S^3_1(K))=d(S^3_1(Q(K))$. This completes the proof. 
\end{proof}

\section{Computation of $\Upsilon_{T_{2,3;n,1}}$}
\label{sec:computation}
The goal of this section is to compute $\Upsilon_{T_{2,3;n,1}}(t)$ for some small $t$. Our strategy is to determine a part of $\Upsilon$ from some information about $CFK^\infty$, which can be obtained from $\hat{HFK}$. The starting point is $\hat{HFK}(T_{2,3;n,1})$ computed by Hom in \cite[Proposition 3.2]{Hom:2015-1}. We remark that the Hom's result is based on the Petkova's computation in \cite{Petkova:2013-1} using the bordered Floer homology. 

\begin{remark}
After the first draft of this paper, Wenzhao Chen informed us this computation can be alternatively obtained by using his inequality about $\Upsilon$ of cable knots \cite{Chen:2016-1}.
\end{remark}

As an $\mathbb{F}[U, U^{-1}]$-basis of $CFK^\infty(T_{2,3;n,1})$, we will take Hom's generators of $\hat{HFK}(T_{2,3;n,1})$ multiplied by  appropriate $U$-powers given in Table \ref{tab:cableinfty}. For the reader's convenience, we first recall the Hom's result on $\hat{HFK}(T_{2,3;n,1})$.
\begin{proposition}[{\cite[Proposition 3.2]{Hom:2015-1}}]
	\label{prop:cable}
	The group $\widehat{HFK}(T_{2,3;n,1})$ has rank $6n-5$. The generators are listed in Table \ref{tab:cable}, and the non-zero higher differentials are
	\begin{align*}
		\partial b_1 v_1 &= b_1\mu_1[n] \\
		\partial b_j v_1 &= b_{2n-j-1}v_1[n-j] &2 \leq j \leq n-1~\\
		\partial b_jv_2 &= b_{j+1}\mu_1[1]&1 \leq j \leq n-2~\\
		\partial b_{n-1}v_2 &= b_nv_2[1] \\
		\partial b_j \mu_2 &= b_{2n-j-1}\mu_2[n-j]&1\leq j \leq n-1,
	\end{align*}
	where the brackets denote the drop in Alexander filtration. For example, the Alexander filtration of $b_1 \mu_1$ is $n$ less than that of $b_1v_1$.
\end{proposition}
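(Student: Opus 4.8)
The plan is to compute $\widehat{HFK}(T_{2,3;n,1})$, together with enough of $CFK^\infty(T_{2,3;n,1})$ to detect the higher differentials, by bordered Heegaard Floer homology, following Petkova \cite{Petkova:2013-1}. Write $T_{2,3;n,1}=P(T_{2,3})$, where $P$ is the pattern given by the $(n,1)$-torus knot in the solid torus $S^1\times D^2$ (taken with respect to the standard longitude), so that $P$ has winding number $n$. The crucial simplification is that the companion is as small as possible: $CFK^\infty(T_{2,3})$ is the three-generator staircase complex, which makes every bordered module below completely explicit.

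First I would compute $\widehat{CFD}$ of the $0$-framed exterior of $T_{2,3}$. By the Lipshitz--Ozsv\'ath--Thurston algorithm this is produced mechanically from a reduced model of $CFK^\infty(T_{2,3})$ and the framing, and for a staircase it is a short, explicit type-$D$ module. Next I would record the bordered invariant of the pattern piece, namely the exterior of $P$ inside $S^1\times D^2$ (whose size grows linearly in $n$), and pair it with the companion invariant:
\[\widehat{CFK}(T_{2,3;n,1}) \simeq \bigl(\text{bordered invariant of the pattern exterior}\bigr)\boxtimes\widehat{CFD}\bigl(\text{exterior of }T_{2,3},\,0\bigr),\]
which yields a $\Z\oplus\Z$-filtered model of $\widehat{CFK}(T_{2,3;n,1})$. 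After cancelling all filtration-preserving differentials by Gaussian elimination, I would check that the reduced model has the $6n-5$ generators named in Table \ref{tab:cable}, that its associated graded differential vanishes (so the underlying $\mathbb{F}$-module is literally $\widehat{HFK}$), and that the residual Alexander-decreasing arrows are exactly the five families displayed in the proposition. The Alexander grading is read off from the winding number --- the companion's filtration levels get rescaled by $n$ --- and the Maslov grading from the standard shift formulas, pinned down by the conjugation symmetry $\widehat{HFK}_m(K,a)\cong\widehat{HFK}_{m-2a}(K,-a)$.

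I expect the difficulty to be bookkeeping rather than anything conceptual. The two delicate points are: (i) tracking the bigrading through the box tensor product, since the winding number $n$ rescales the Alexander grading and the overall additive constants must be fixed so that the symmetry above and the known Alexander polynomial $\Delta_{T_{2,3;n,1}}(t)=\Delta_{T_{2,3}}(t^{n})$ come out correctly; and (ii) carrying out the Gaussian elimination carefully enough to be certain the reduced complex has rank exactly $6n-5$ and that the listed higher differentials form a complete list, since many a priori arrows appear in the raw tensor product and one must verify which of them survive (or cancel) at each step. Since Petkova \cite{Petkova:2013-1} already carries out exactly this kind of bordered computation for cables of thin knots --- and $T_{2,3}$ is thin --- in practice one may instead substitute the staircase $CFK^\infty(T_{2,3})$ into her general cabling description and simplify; Hom's Proposition \ref{prop:cable} is the $(n,1)$-cable case of that specialization.
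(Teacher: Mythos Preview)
The paper does not prove this proposition at all: it is quoted verbatim from Hom \cite[Proposition~3.2]{Hom:2015-1}, with the remark that Hom's result in turn rests on Petkova's bordered computation \cite{Petkova:2013-1}. Your proposal correctly identifies this provenance and sketches exactly the bordered Heegaard Floer argument that Petkova and Hom carry out, so there is nothing to compare---you have outlined the proof that the cited references give, while the present paper simply imports the statement as input.
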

\begin{table}[htb!]
\vspace{5pt}
\begin{center}
\begin{tabular}{llllll}
\hline
&Generator \qquad \qquad & $(M, A)$ \qquad \qquad \qquad \qquad&&\\\hline
&$au_1$ & $(0, n)$  &\\
&$b_1v_1$  & $(-1, n-1)$  &\\
&$b_1\mu_1$ & $(-2, -1)$  &\\
&$b_jv_2$ & $(-2j-1, -j)$  &$1 \leq j \leq n-2$&\\
&$b_{j+1}\mu_1$ \qquad \quad & $(-2j-2, -j-1)$   &$1 \leq j \leq n-2$&\\
&$b_{n-1}v_2 $ & $(-2n+1, -n+1)$  &&\\
&$b_nv_2$ & $(-2n, -n)$ & \\
&$b_jv_1$ & $(-1, -j+n)$  &$2 \leq j \leq n-1$&\\
&$b_{2n-1-j}v_1$ & $(-2, 0)$  &$2 \leq j \leq n-1$&\\
&$b_j\mu_2$ & $(0, -j+n)$  &$1 \leq j \leq n-1$ &\\
&$b_{2n-1-j}\mu_2$ & $(-1, 0)$  &$1 \leq j \leq n-1$ &\\\hline
\end{tabular}
\end{center}
\caption{The generators of $\widehat{HFK}(T_{2,3;n,1})$.}
\label{tab:cable}
\end{table}

Recall that $CFK^\infty(K)$ is well-defined up to filtered chain homotopy equivalence. Using \cite[Lemma 4.5]{Rasmussen:2003-1}, we change $CFK^\infty(T_{2,3;n,1})$ by filtered chain homotopy equivalence so that $CFK^\infty(T_{2,3;n,1})\{i=0\}$ is equal to  $\hat{HFK}(T_{2,3;n,1})$. From now on, the isomorphism type of the underlying module of $CFK^\infty(T_{2,3;n,1})$ is fixed. Therefore, $CFK^\infty(T_{2,3;n,1})$ is a free $\mathbb{F}[U,U^{-1}]$-module generated by 
\[S=\{x_k, x_k', y_l, y_l', z_k, w_l\mid 2\leq k\leq n-1, 1\leq l\leq n-1\}.\]
Here, the elements of $S$ are given in Table \ref{tab:cableinfty}. They are appropriate $U$-translates of Hom's generators. It is easy to check the following (see Figure \ref{fig:complex}):

\begin{enumerate}
\item The Maslov gradings of $x_k$ and $x_k'$ are 1.
\item The Maslov gradings of $y_k$, $y_k'$, and $z_k$ are 0.
\item The Maslov gradings of $w_k$ are $-1$.
\item The (algebraic, Alexander) filtrations of $x_k$ and $y_k$ are $(1,k)$ and $(0,k)$, respectively.
\item The (algebraic, Alexander) filtrations of $x_k'$ and $y_k'$ are $(k,1)$ and $(k,0)$, respectively.
\item The (algebraic, Alexander) filtrations of $z_k$ and $w_k$ are $(1,1)$ and $(0,0)$, respectively.
\end{enumerate}

\begin{table}[htb!]
\vspace{5pt}
\begin{center}
\begin{tabular}{lllllll}
\hline
&Generator \qquad \qquad  & $(Alg,Alex,M)$ &&\\\hline
&$y_n=au_1$  & $(0, n,0)$ &\\
&$x_n=U^{-1}b_1v_1$ &$(1,n,1)$ & &\\
&$y_1'=U^{-1}b_1\mu_1$ & $(1,0,0)$ &\\
&$x_{k}'=U^{-k}b_{k-1}v_2$ & $(k,1,1)$  &$2 \leq k \leq n-1$&\\
&$y_{k}'=U^{-k}b_{k}\mu_1$ \qquad \quad & $(k,0,0)$   &$2 \leq k \leq n-1$&\\
&$x_p'=U^{-n}b_{n-1}v_2 $ & $(n,1,1)$  &&\\
&$y_p'=U^{-n}b_nv_2$ & $(n,0,0)$  & \\
&$x_{k}=U^{-1}b_{n-k+1}v_1$ & $(1,k,1)$  &$2 \leq k\leq n-1$&\\
&$z_{k}=U^{-1}b_{n+k-2}v_1$ &$(1,1,0)$ &$2 \leq k \leq n-1$&\\
&$y_{k}=b_{n-k}\mu_2$ & $(0,k,0)$  &$1 \leq k \leq n-1$ &\\
&$w_{k}=b_{n+1-k}\mu_2$ & $(0,0,-1)$  &$1 \leq k \leq n-1$ &\\\hline
\end{tabular}
\end{center}
\caption{The generators of $CFK^\infty(T_{2,3;n,1})$.}
\label{tab:cableinfty}
\end{table} 

\begin{figure}
	\centering
	\begin{tikzpicture}[xscale=1.2, yscale=1.2]
	\draw (-0.5,0)--(3,0) ;
	\draw [dotted] (3,0)--(4,0);
	\draw [->](4,0)--(7,0) node [right] {$Alg.$};
	\draw (0,-0.5)--(0,3) ;
	\draw [dotted] (0,3)--(0,4);
	\draw [->](0,4)--(0,7) node [above] {$Alex.$};

	
	\foreach \i/\label in {6/n,5/{n-1},2/2}
	{
		\draw [fill=black] (1,\i) circle (1.5pt) node (x\i) [anchor=west]  {\tiny{$x_{\label}$}};
		\draw [fill=black] (\i,1) circle (1.5pt) node [anchor=south] {\tiny{$x_{\label}'$}};
	}
	
	\foreach \i/\label in {6/n,5/{n-1},2/2,1/1}
	{
		\draw [fill=black] (0,\i) circle (1.5pt) node [anchor=east] {\tiny{$y_{\label}$}};
		\draw [fill=black] (\i,0) circle (1.5pt) node (y\i) [anchor=north] {\tiny{$y_{\label}'$}};
	}
	
	\foreach \i/\label in {-0.2/1,-0.1/{2},0.2/{n-1}}
	\draw [fill=black] (0-\i,0+\i) circle (1.5pt) node [anchor=north east] {\tiny{$w_{\label}$}};
	\draw [dotted] (-0.1,0.1) -- (0.1,-0.1);

	\foreach \i/\label in {-0.1/n-1,0.1/2}
	\draw [fill=black] (1-\i,1+\i) circle (1.5pt) node [anchor=north east] {\tiny{$z_{\label}$}};
	\draw [dotted] (1-0.1,1+0.1) -- (1+0.1,1-0.1);
	
	\foreach \i/\j/\k/\l in {1/6/0/6, 1/5/0/5, 1/2/0/2}  
	{
		\draw [->] (\j,\i-0.1)--(\l,\k+0.1);		
	}	
	
	\draw [->] (1-0.1,6)--(0+0.1,6);	
	
	\path 
	(1+0.05,6-0.05) edge [->, out=280, in=80] (1+0.05,0+0.05) 
	(1+0.05,5-0.05) edge [->, out=280, in=80] (1+0.1,1-0.1+0.05) 
	(1+0.05,2-0.05) edge [->, out=280, in=70] (1-0.1+0.05,1+0.1+0.05) 
	
	(-0.05, 5-0.05) edge [->, out=260, in=100] (0-0.05-0.2, 0+0.05+0.2) 
	(-0.05, 2-0.05) edge [->, out=260, in=100] (0-0.05+0.1, 0+0.05-0.1)
	(0, 1-0.05) edge [->, out=270, in=100] (0-0.05+0.2, 0+0.05-0.2);

	\draw [->, dashed] (1-0.05,6-0.05)--(0+0.05,2+0.05);
	\draw [->, dashed] (1-0.05,0)--(0+0.1+0.05,0-0.1);
	
	\draw [dotted] (1, 3)--(1,4);
	\draw [dotted] (3,1)--(4,1);
		\draw [dotted] (0, 3)--(0,4);
		\draw [dotted] (3,0)--(4,0);

	\draw [dashed, scale=1, domain=-0.2:1.1,smooth,variable=\x, blue] plot ({\x},{-5.5*\x+5.5});

	

	\end{tikzpicture}
	\caption{A summand $C_{model}$ of the complex $CFK^\infty(T_{2,3;n,1})$.} 
	\label{fig:complex}
\end{figure}
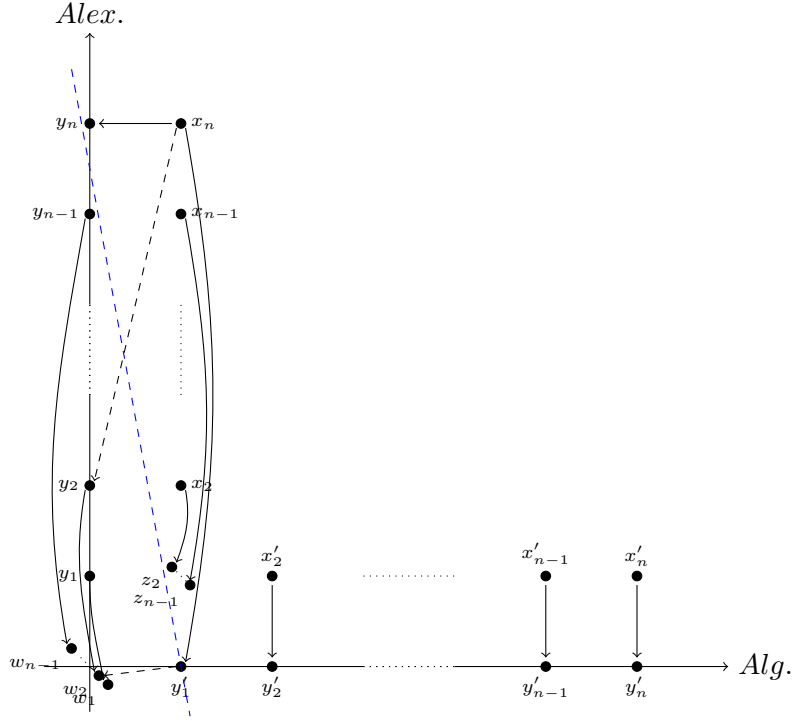

\begin{lemma}\label{lem:splitting}Let $C_{\mathrm{model}}$ be the $\mathbb{F}$-module generated by 
 \[S=\{x_k, x_k', y_l, y_l', z_k, w_l\mid 2\leq k\leq n-1, 1\leq l\leq n-1\}.\]
Then, $C_{\mathrm{model}}$ is a subcomplex of $CFK^\infty(T_{2,3;n,1})$. That is, $\d C_{\mathrm{model}}\subset C_{\mathrm{model}}$.
\end{lemma}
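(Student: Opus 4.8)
The plan is to compute $\partial g$ for each generator $g$ in the list $S$ and to read off that the result is again an $\mathbb F$-linear combination of generators in $S$. The main leverage is the Maslov grading. According to the $(Alg,Alex,M)$-data in Table \ref{tab:cableinfty}, every generator in $S$ has Maslov grading in $\{-1,0,1\}$ --- the $x$-type generators in grading $1$, the $y$- and $z$-type generators in grading $0$, and the $w$-type generators in grading $-1$ --- whereas multiplication by $U$ lowers the Maslov grading by $2$ while $\partial$ lowers it by exactly $1$. Hence the Maslov-$0$ part of $CFK^\infty(T_{2,3;n,1})$ is precisely the $\mathbb F$-span of all $y$- and $z$-type generators, its Maslov-$(-1)$ part is $\mathbb F\langle w_\bullet\rangle\oplus\mathbb F\langle U\!\cdot\!x_\bullet\rangle$, and its Maslov-$(-2)$ part is $\mathbb F\langle U\!\cdot\!y_\bullet,\;U\!\cdot\!z_\bullet\rangle$.

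This observation disposes of the $x$-type generators with no work: $\partial x$ lies in the Maslov-$0$ part, which is contained in $C_{\mathrm{model}}$. For the $w$-type generators, $\partial w$ would lie in the Maslov-$(-2)$ part, which meets $C_{\mathrm{model}}$ trivially, so I would have to show $\partial w=0$. I would split $\partial=\partial^{v}+\partial^{h}+\partial^{d}$ into the parts preserving the algebraic filtration $i$, preserving the Alexander filtration $j$, and strictly lowering both. The vertical part $\partial^{v}$ is the $\mathbb F[U,U^{-1}]$-linear extension of the higher differentials of Proposition \ref{prop:cable}; since no ``$w$-type'' $b_{\ast}\mu_2$ is a source there, $\partial^{v}w=0$. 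The horizontal part $\partial^{h}w$ is the image, under the conjugation symmetry of $CFK^\infty$ interchanging $i$ and $j$, of the vertical differential of another $w$-type generator, hence also $0$. Finally $\partial^{d}w=0$ by position: the $w$-type generators all sit at $(i,j)=(0,0)$, and by the previous paragraph no element of Maslov grading $-2$ lies strictly to the south-west of $(0,0)$ (the $U\!\cdot\!y_\bullet$ have $Alex\ge0$ and the $U\!\cdot\!z_\bullet$ sit at $(0,0)$).

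The only case requiring real attention is $\partial$ on the $y$-, $y'$- and $z$-type generators, where $\partial$ lands in $\mathbb F\langle w_\bullet\rangle\oplus\mathbb F\langle U\!\cdot\!x_\bullet\rangle$ and I must rule out the $U\!\cdot\!x_\bullet$ summand. The vertical and horizontal parts are harmless: the only sources among $\mu_1$-, $\mu_2$- and ``$w$-type'' $v_1$-generators in Proposition \ref{prop:cable} are the $y_l=b_{n-l}\mu_2$, whose vertical differential is again of $\mu_2$-type (a $w$-type generator), so $\partial^{v}$ maps the $y$-, $y'$- and $z$-type generators into $\mathbb F\langle w_\bullet\rangle$, and by conjugation symmetry so does $\partial^{h}$. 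For $\partial^{d}$ I would argue from the $(Alg,Alex)$-coordinates of Table \ref{tab:cableinfty} that no arrow strictly lowering both filtrations can connect a $y$-type (at $i=0$), $z$-type (at $i=1$), or $y'$-type (at $j=0$) generator to a $U\!\cdot\!x_\bullet$ --- each of these generators sits too close to the axes, and the $U\!\cdot\!x_\bullet$ have $Alex\ge0$ and lie at $i\ge0$, so such an arrow would be forced to raise one coordinate. Hence any arrow to a $U\!\cdot\!x_\bullet$ would be vertical or horizontal and has already been excluded. Therefore $\partial$ sends each $y$-, $y'$- and $z$-type generator into $\mathbb F\langle w_\bullet\rangle\subset C_{\mathrm{model}}$, and combining the three cases yields $\partial C_{\mathrm{model}}\subset C_{\mathrm{model}}$.

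The hard part, modest as it is, will be this last bit of bookkeeping with the coordinates of Table \ref{tab:cableinfty}, together with correctly transporting the horizontal differential from the vertical one via the conjugation symmetry rather than recomputing it from scratch. Everything else is dictated by the Maslov grading, and I anticipate no conceptual difficulty --- only care in reading Figure \ref{fig:complex}.
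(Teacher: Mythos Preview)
Your Maslov-grading argument for the $x$-type generators is correct and is exactly what the paper does, and your position arguments for $\d_D$ are also fine. The genuine gap is your repeated appeal to ``conjugation symmetry'' to determine $\d_H$ from $\d_V$. The $i\leftrightarrow j$ symmetry of $CFK^\infty$ is only a filtered chain \emph{homotopy} equivalence, not a chain-level identity on the particular representative being used. The paper's representative is fixed by Rasmussen's reduction so that $CFK^\infty\{i=0\}=\widehat{HFK}(T_{2,3;n,1})$; this pins down $\d_V$ via Proposition~\ref{prop:cable} but tells you nothing about $\d_H$. So you cannot conclude that $\d_H$ of a generator equals the mirror of $\d_V$ of its mirror.

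Concretely, grading and filtration alone do \emph{not} exclude: a $Uy_1$ term in $\d_H w_k$ (this is the unique Maslov-$(-2)$ class at $(i,j)=(-1,0)$); a $Ux_2$ term in $\d_H z_k$ (the unique Maslov-$(-1)$ class at $(0,1)$); or $Ux_m'$ terms in $\d y_l'$ for $2\le m\le l+1$ (Maslov-$(-1)$ classes along $j=0$). The paper handles each of these with a $\d^2=0$ argument rather than symmetry. First, $\d y_k=\d_V y_k=w_k$ by grading (as you note), whence $\d w_k=\d^2 y_k=0$ immediately---no symmetry needed. Next, writing $\d_H z_k=\epsilon_k Ux_2$, one has $\d^2 z_k=\epsilon_k\,\d(Ux_2)$; since $\d_V(Ux_2)=Uz_2\neq 0$ this forces $\epsilon_k=0$. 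Finally, for $y_k'$ one writes $\d y_k'=\sum_l b_{k,l}Ux_l'+\sum_l c_{k,l}w_l$ and observes that if some $b_{k,l}\neq 0$ with $m$ the largest such index, then $\d^2 y_k'$ contains $Uy_m'$ at algebraic level $m-1$, which nothing else in the sum can cancel. Once you replace the three symmetry appeals by these $\d^2=0$ steps, your outline becomes the paper's proof.
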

Before proving this lemma, we first discuss its consequence.
\begin{remark}As an $\mathbb{F}[U,U^{-1}]$-module, $CFK^\infty(T_{2,3;n,1})\cong C_{\mathrm{model}}\otimes \mathbb{F}[U,U^{-1}]$. Since the differential is $U$-equivariant, Lemma \ref{lem:splitting} gives an isomorphism between chain complexes as $CFK^\infty(T_{2,3;n,1})\cong C_{\mathrm{model}}\otimes\mathbb{F}[U,U^{-1}]$ where $\mathbb{F}[U,U^{-1}]$ is endowed with trivial differential and the tensor product is over $\mathbb{F}$. 
\end{remark}
As usual, we may write the differential as $\d=\d_V+\d_D+\d_H$. Here, $\d_V$, $\d_D$ and $\d_H$ are the vertical differential, the diagonal differential and the horizontal differential, respectively. As the terminologies suggest, $\d_V$, $\d_D$ and $\d_H$ are defined via the following conditions:
 \begin{enumerate}
 \item $\d_V$ fixes algebraic filtration but strictly lowers Alexander filtration.
 \item $\d_H$ fixes Alexander filtration but strictly lowers algebraic filtrations. 
 \item $\d_D$ strictly lowers both filtrations.
 \end{enumerate}
 Note that there is no differential component between the same $(i,j)$-filtration level since $CFK^\infty(T_{2,3;n,1})\{i=0\}=\hat{HFK}(T_{2,3;n,1})$.
 \begin{proof}[Proof of Lemma \ref{lem:splitting}] The lemma follows from the consideration of the Maslov grading $M$ and $\Z\oplus\Z$-filtration. Let $C^\infty:=CFK^\infty(T_{2,3;n,1})$. Recall that $C^\infty\{i=0\}=\hat{HFK}(T_{2,3;n,1})$. Since $U$ commutes with $\d$, Proposition \ref{prop:cable} determines $\d_V$ (see also Figure \ref{fig:complex}):
\[
\begin{array}{lll}
	&\partial_V x_n=y_1', &\\
	&\partial_V x_{k}= z_{k}  &\quad 2 \leq k \leq n-1, \\
	&\partial_V x_{k}'= y_{k}' &\quad 2 \leq k\leq n, \\
	\text{and }&\partial_V y_{k}= w_{k}  &\quad 1 \leq k \leq n-1.
\end{array}
\]
 We first observe that $\d y_k=\d_V y_k=w_k$ for all $k$. This is equivalent to $\d_Dy_k+\d_Hy_k=0$. Since $M(y_k)=0$, $M(\d_Dy_k+\d_Hy_k)=-1$. Since $\d_D$ and $\d_H$ strictly lower the algebraic filtration, $\d_Hy_k+\d_Dy_k\in C^\infty\{i\leq -1\}$. Since multiplying by $U$ lowers the Maslov grading by $2$, every element of $C^\infty\{i\leq -1\}$ has the Maslov grading less than or equal to $-2$. For every $k$, it follows that $\d_Dy_k+\d_Hy_k=0$, and $\d y_k=w_k\in C_\mathrm{model}$.
 
 Since $\d^2=0$, $\d w_k=\d^2 y_k=0$ for all $k$. Recall that we have determined $\d_V$. It follows that $\d_Vz_k=0$ for all $k$. Then, $\d z_k=\d_Dz_k+\d_H z_k\in C^\infty\{i\leq 0,j\leq 1\}$ and $M(\d z_k)=-1$. By the grading consideration, we can write 
 \[\d_Hz_k=\epsilon_k Ux_2,\quad \d_Dz_k=\textstyle\sum\limits_{l=1}^{n-1} a_{k,l}w_l\]
  where $\epsilon_k, a_{k,l}\in \{0,1\}$. On the other hand, note that $\d_V Ux_2=Uz_2$ and hence $\d U x_2\neq 0$. Since $\d w_l=0$ for all $l$,
  \[\d^2z_k=\textstyle\sum\limits_{l=1}^{n-1}a_{k,l}\d w_l+\epsilon_k \d Ux_2=\epsilon_k\d Ux_2.\]
   Since $\d^2=0$ and $\d Ux_2\neq 0$, it follows that $\epsilon_k=0$ and $\d z_k=\textstyle\sum\limits_{l=1}^{n-1} a_{k,l}w_l\in C_{\mathrm{model}}$ for all $k$.
 
 Now, we prove that $\d x_k, \d x_k'\in C_{\mathrm{model}}$.
 Since $M(x_k)=M(x_k')=1$, $M(\d x_k)=M(\d x_k')=0$. Since the multiplication by $U$ or $U^{-1}$ changes the Maslov grading by $2$, the Maslov grading 0 elements are linear combinations of $y_k$, $y_k'$ and $z_k$. This shows that $\d x_k, \d x_k'\in C_{\mathrm{model}}$.

Since $M(y_k')=0$, $M(\d y_k')=-1$. Note that $\d y_k'\in C\{j\leq 0\}$. By grading and filtration consideration, we can write 
\[\d y_k'=\textstyle\sum\limits_{l=2}^{k+1}b_{k,l}Ux_l'+\sum\limits_{l=1}^{n-1}c_{k,l}w_l,\] where $b_{k,l}, c_{k,l}\in \{0,1\}$. It should be understood that $b_{k,n+1}=0$. Since $\d w_l=0$ for all $l$,  
\begin{equation*}\tag{$\ast$} 0=\d^2 y_k'=\textstyle\sum\limits_{l=2}^{k+1}b_{k,l}U\d x_l'\
\end{equation*}
for all $k$. If $b_{k,l}=0$ for all $k,l$, then $\d y_k'\in C_{\mathrm{model}}$ and we are done. Suppose that there is $k$ such that $b_{k,l}\neq 0$ for some $l$. Fix such $k$ and let $m$ be the maximum among $l$'s such that $b_{k,l}=1$. Note that $U\d x_m'=Uy_m'+U\d_Dx_m'+U\d_Hx_m'$. Then, $(\ast)$ implies that
\[0=U\d x_m'+\textstyle\sum\limits_{l=2}^{m-1}b_{k,l}U\d x_l'=Uy_m'+U\d_Dx_m'+U\d_Hx_m'+\textstyle\sum\limits_{l=2}^{m-1} b_{k,l}U\d x_l'.\]
Recall that the algebraic filtration of $Uy_m'$ is $m-1$. On the other hand, the algebraic filtration of the remaining terms in the right hand side are less than or equal to $m-2$. This is a contradiction and completes the proof.
\end{proof}
From the lemma above, we may write $\d y_1'=\sum\limits_{i=1}^{n-1} a_iw_i$ for some $a_i\in\{0,1\}$.
\begin{lemma}\label{lem:nontrivial} 
	\[[y_n]=[y_1'+\textstyle\sum\limits_{i=1}^{n-1}a_iy_i]\neq0\textrm{~in~}H_*(C^\infty).\]
\end{lemma}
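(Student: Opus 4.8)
The plan is to prove two statements: (a) $\partial x_n = y_n + y_1' + \sum_{i=1}^{n-1} a_i y_i$, which at once gives $[y_n] = [y_1' + \sum_{i=1}^{n-1} a_i y_i]$ in $H_*(C^\infty)$ (because $\partial y_1' = \sum a_i w_i$ and $\partial y_i = w_i$, so $[\partial x_n] = 0$ yields the identity); and (b) $[y_n] \neq 0$ in $H_*(C^\infty)$. For (a): Proposition \ref{prop:cable} gives $\partial_V x_n = y_1'$, and the Maslov- and filtration-degree bookkeeping of Lemma \ref{lem:splitting}, carried out against Table \ref{tab:cableinfty}, gives $\partial_D x_n = \sum_{i=1}^{n-1} c_i y_i$ and $\partial_H x_n = c\, y_n$ for some $c, c_i \in \{0,1\}$ (note that $y_n$ is the unique Maslov-$0$ generator at Alexander filtration $n$), after which $\partial^2 x_n = 0$ forces $c_i = a_i$. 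So (a) reduces to the single claim $c = 1$.

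To prove $c = 1$, I pass to the quotient complex $C_{(n)} := C^\infty / C^\infty\{j \le n-1\}$, whose induced differential is the horizontal part $\partial_H$ (the only part of $\partial$ preserving the top Alexander grading $n$). A direct grading count from Table \ref{tab:cableinfty} shows that in $C_{(n)}$ the only Maslov-$0$ generator is $y_n$ (a cycle) and the only Maslov-$1$ generator is $x_n$, with $\partial_H x_n = c\, y_n$; hence $H_0(C_{(n)}, \partial_H) \cong \mathbb F\langle y_n\rangle / \langle c\, y_n\rangle$, which is nonzero exactly when $c = 0$. On the other hand, multiplication by $U^{-n}$ is a chain isomorphism from $(C^\infty\{j=0\}, \partial_H)$ onto $(C_{(n)}, \partial_H)$ that raises Maslov grading by $2n$, while the $i \leftrightarrow j$ symmetry of $CFK^\infty$ gives $H_*(C^\infty\{j=0\}, \partial_H) \cong H_*(C^\infty\{i=0\}, \partial_V)$ as graded vector spaces. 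Since $H_*(C^\infty\{i=0\}, \partial_V) = \widehat{HF}(S^3) = \mathbb F$ lies in Maslov grading $0$, we get $H_*(C_{(n)}, \partial_H) \cong \mathbb F$ concentrated in Maslov grading $2n$, hence $H_0(C_{(n)}) = 0$ and therefore $c = 1$.

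For (b): first, $y_n$ is a cycle, since $\partial_V y_n = 0$ by Proposition \ref{prop:cable} while $(\partial_D + \partial_H)y_n$ would have to lie in $C^\infty\{i \le -1\}$ in Maslov grading $-1$, which is impossible because $C^\infty\{i=0\} = \widehat{HFK}(T_{2,3;n,1})$ is supported in Maslov gradings $\le 0$, so $C^\infty\{i \le -1\} = U \cdot C^\infty\{i \le 0\}$ is supported in Maslov gradings $\le -2$. Now consider the subcomplex $C^\infty\{i \le 0\}$ and its quotient complex $C^\infty\{i=0\}$ (induced differential $\partial_V$). Since $C^\infty\{i \le 0\}$ involves only the algebraic filtration, its homology depends only on $S^3$: it is $U^{-1}HF^-(S^3)$, a free $\mathbb F[U]$-module, so the inclusion $C^\infty\{i \le 0\} \hookrightarrow C^\infty = C^\infty\{i \le 0\}[U^{-1}]$ induces an injection on homology. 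It therefore suffices to show $[y_n] \neq 0$ in $H_*(C^\infty\{i \le 0\})$, and via the quotient map, to show $[y_n] \neq 0$ in $H_*(C^\infty\{i=0\}, \partial_V)$. But a grading count from Table \ref{tab:cableinfty} shows the Maslov-$0$ generators of $C^\infty\{i=0\}$ are exactly $y_1, \dots, y_n$, there are no Maslov-$1$ generators, and $\partial_V y_k = w_k$ for $k \le n-1$ while $\partial_V y_n = 0$; hence $H_0(C^\infty\{i=0\}) = \mathbb F\langle y_n\rangle$, so $[y_n] \neq 0$.

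The crux is the identification $c = 1$: the naive attempt to show directly that $y_n$ is not a boundary of an $\mathbb F[U,U^{-1}]$-combination of the $x_k$'s and $x_k'$'s gets stuck on the undetermined horizontal and diagonal components of $\partial x_k'$, so one genuinely needs an external input, namely the $i \leftrightarrow j$ symmetry applied to the top Alexander layer $\{j=n\}$. The rest is routine degree bookkeeping against Table \ref{tab:cableinfty}.
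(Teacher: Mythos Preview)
Your argument is correct. Part (a) follows the paper's route: both you and the paper establish $\partial_H x_n = y_n$ by observing that $y_n$ must be null-homologous in the horizontal complex at $j=n$ (the paper cites \cite[Lemma 3.3]{Hom:2015-1} for this, while you spell out the reason via the $i\leftrightarrow j$ symmetry and $\widehat{HF}(S^3)\cong\mathbb{F}_{(0)}$), and then both use $\partial^2 x_n=0$ to identify the diagonal coefficients with the $a_i$. One small notational slip: your $C_{(n)}:=C^\infty/C^\infty\{j\le n-1\}$ literally contains all rows $j\ge n$, not just the single row $j=n$, so the claimed isomorphism $U^{-n}\colon C^\infty\{j=0\}\to C_{(n)}$ only holds for the associated graded piece $C^\infty\{j=n\}$. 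This does not affect the conclusion, since in either interpretation $y_n$ and $x_n$ are the unique Maslov-$0$ and Maslov-$1$ generators and $H_0$ vanishes.

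Part (b) is where you genuinely diverge from the paper. The paper proves $[y_1'+\sum a_iy_i]\neq 0$ by a direct diagram chase inside $C_{\mathrm{model}}$: assuming it is a boundary $\partial(\sum c_ix_i+\sum c_i'x_i')$, one tracks the components $y_1'$, $y_n$, $y_{l_1}'$, $y_n'$ through the (partially unknown) differentials and reaches a contradiction. Your approach instead proves $[y_n]\neq 0$ structurally: $y_n$ is a cycle by a grading argument, it survives to the quotient $(C^\infty\{i=0\},\partial_V)$ where it visibly generates $H_0\cong\widehat{HF}(S^3)$, and the inclusion $C^\infty\{i\le 0\}\hookrightarrow C^\infty$ is injective on homology because $H_*(C^\infty\{i\le 0\})$ is $U$-torsion-free. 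Your route is cleaner and avoids the case analysis on the undetermined arrows in $C_{\mathrm{model}}$; the paper's chase, on the other hand, stays entirely within the explicit model and does not invoke the general $HF^-\hookrightarrow HF^\infty$ injectivity for $S^3$.
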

\begin{proof}
	Note that $M(y_n)=0$. Since $y_n$ is null-homologous in $C^\infty\{j=n\}$ and $x_n$ is the unique element $C^\infty\{j=n\}$ with grading $-1$, $\partial_Hx_n=y_n$. See \cite[Lemma 3.3]{Hom:2015-1}. Therefore, we may write as 
	$$\partial x_n=(\d_V+\d_D+\d_H)x_n=y_1'+\textstyle\sum\limits_{i=1}^{n-1}b_iy_i+y_n$$
	for some $b_i\in\{0,1\}$. Note that, by Lemma \ref{lem:splitting}, it is enough to consider the elements in $C_{model}$ to prove this Lemma.
	
    Write as $\partial y_1'=\sum\limits_{i=1}^{n-1} a_iw_i$. By considering $\d^2x_n=0$, we obtain that $a_i=b_i$ for all $i$:
    \[ 0=\d^2x_n=\d \left(y_1'+\textstyle\sum\limits_{i=1}^{n-1}b_iy_i+y_n\right)=\textstyle\sum\limits_{i=1}^{n-1}(a_i+b_i)w_i.\]
By considering $\d x_n$, it follows that $[y_n]=[y_1'+\sum\limits_{i=1}^{n-1}a_iy_i]$. 

We will prove that $[y_1'+\sum\limits_{i=1}^{n-1}a_iy_i]\neq0$ in $H_*(C^\infty)$ by chasing diagram on the $(i,j)$-plane of $C_{model}$ (see Figure \ref{fig:complex}). Suppose that $y_1'+\sum\limits_{i=1}^{n-1}a_iy_i$ is a boundary. By the grading consideration, we may write as
	\begin{equation*}\tag{$\ast\ast$}\partial\left(\textstyle\sum\limits_{i=2}^{n}(c_ix_i+c_i'x_i')\right)=y_1'+\textstyle\sum\limits_{i=1}^{n-1}a_iy_i,\end{equation*}
	for some $c_i, c_i'\in \{0,1\}$. We look at $y_1'$ on the RHS of $(\ast\ast)$. Observe that the arrows coming to $y_1'$ are a vertical arrow from $x_n$ and possible diagonal arrows from $x_k'$'s for some $k=2,\cdots,n$. It follows from the above equation that either $c_n\neq0$ or $c'_k\neq0$ for some $k$. However, $c_n=0$, since $\partial x_n$ contains $y_n$, to which no other arrow maps, and the RHS of $(\ast\ast)$ dose not have $y_n$. 
	
	Now, pick $l_1$ such that $c'_{l_1}\neq0$. Since $\partial _Vx_{l_1}'=y_{l_1}'$ and the RHS of $(\ast\ast)$ does not have $y_{l_1}'$, there must be a diagonal arrow coming from $x'_{l_2}$ to $y'_{l_1}$ and $c_{l_2}\neq0$. By continuing this argument, we have that $c'_n\neq 0$. However this is not possible since $\partial x_n'$ contains $y_n'$, to which no arrow comes and the RHS of $(\ast\ast)$ does not have $y_n'$. Therefore, $y_1'+\sum\limits_{i=1}^{n-1}a_iy_i$ is not a boundary in $C_{model}$, and hence in $C^\infty$.
\end{proof}
Now, we are ready to prove Theorem \ref{thm:C}. For the reader's convenience, we recall the statement of Theorem \ref{thm:C}. 
\begin{theoremC} Let $T_{2,3;n,1}$ be the $(n,1)$-cable of the right-handed trefoil knot. Then 
	$$\Upsilon_{T_{2,3;n,1}}(t)=\left\{\begin{array}{lll}
	-nt&\text{if}&t\leq\frac{2}{1+n}\\
	t-2&\text{if}&\frac{2}{1+n}\leq t\leq\frac{2}{1+n}+\epsilon,
	\end{array}\right.$$
	for some small $\epsilon>0$.
\end{theoremC}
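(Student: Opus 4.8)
The plan is to read $\Upsilon_{T_{2,3;n,1}}(t)$, for $t$ near $0$, straight off the chain complex $C^\infty:=CFK^\infty(T_{2,3;n,1})$, which by Lemma \ref{lem:splitting} we take to be $C_{\mathrm{model}}\otimes\mathbb{F}[U,U^{-1}]$ on the generators of Table \ref{tab:cableinfty}. For $t\in(0,2)$ put $\varphi_t(i,j)=(1-\tfrac{t}{2})i+\tfrac{t}{2}j$, so that the subcomplex $C\{j\le\tfrac{2}{t}s+(1-\tfrac{2}{t})i\}$ appearing in Livingston's definition of $\Upsilon$ is exactly the span of the generators lying at $\varphi_t\le s$. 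The first step is to make the computation finite-dimensional. Since $H_*(C^\infty)\cong\mathbb{F}[U,U^{-1}]$ is supported in even Maslov gradings while $C_{\mathrm{model}}$ is supported in Maslov gradings $-1,0,1$, no $U$-power acts within Maslov grading $0$, and in Maslov grading $1$ the only $U$-translates are the elements $U^{-1}w_l$, which are cycles since $\partial w_l=0$. Hence the Maslov-$0$ part of $C^\infty$ is $\operatorname{span}_{\mathbb{F}}\{y_l,y_l',z_k\}$, and every Maslov-$0$ cycle representing the generator $[y_n]$ of $H_0(C^\infty)$ (Lemma \ref{lem:nontrivial}) has the form $y_n+\partial w$ with $w$ a \emph{finite} $\mathbb{F}$-linear combination of the $x_k,x_k'$. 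Consequently
\[\Upsilon_{T_{2,3;n,1}}(t)=-2\,s_0(t),\qquad s_0(t):=\min_{w}\ \max_{(i,j)\in\operatorname{supp}(y_n+\partial w)}\varphi_t(i,j),\]
a minimum over a finite-dimensional affine space, so attained.

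Next I would establish the upper bound $s_0(t)\le\min\!\big(\tfrac{nt}{2},\,1-\tfrac{t}{2}\big)$ for $t\le\tfrac{2}{n}$ by exhibiting two explicit representatives. First, $y_n$ is itself a cycle (this is used inside the proof of Lemma \ref{lem:nontrivial}) and sits at $(0,n)$, so its $\varphi_t$-value is $\varphi_t(0,n)=\tfrac{nt}{2}$. Second, by Lemma \ref{lem:nontrivial} the cycle $y_1'+\sum_{i=1}^{n-1}a_iy_i$ also represents $[y_n]$; its support lies in $\{(1,0)\}\cup\{(0,i):1\le i\le n-1\}$, and as $\varphi_t(1,0)=1-\tfrac{t}{2}$ while $\varphi_t(0,i)=\tfrac{it}{2}\le\tfrac{(n-1)t}{2}\le1-\tfrac{t}{2}$ for $t\le\tfrac{2}{n}$, its $\varphi_t$-value is $1-\tfrac{t}{2}$ on that range.

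The matching lower bound $s_0(t)\ge\min\!\big(\tfrac{nt}{2},\,1-\tfrac{t}{2}\big)$ is the heart of the proof. Evaluating $\varphi_t$ on the Maslov-$0$ generators for $0<t\le\tfrac{2}{n}$ gives $\varphi_t(y_n)=\tfrac{nt}{2}$, $\varphi_t(y_k')=k(1-\tfrac{t}{2})\ge1-\tfrac{t}{2}$ and $\varphi_t(z_k)=1$, all of which are $\ge\min(\tfrac{nt}{2},1-\tfrac{t}{2})$, whereas the only generators with $\varphi_t<\min(\tfrac{nt}{2},1-\tfrac{t}{2})$ are among $y_1,\dots,y_{n-1}$. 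So if some representative $y_n+\partial w$ had $\varphi_t$-value strictly below $\min(\tfrac{nt}{2},1-\tfrac{t}{2})$, it would lie in $\operatorname{span}\{y_1,\dots,y_{n-1}\}$; but $\partial y_i=w_i$ (from the proof of Lemma \ref{lem:splitting}) and the $w_i$ are distinct generators, so the only cycle in that span is $0$, forcing $[y_n]=0$ and contradicting Lemma \ref{lem:nontrivial}. Hence $s_0(t)=\min(\tfrac{nt}{2},1-\tfrac{t}{2})$ on $[0,\tfrac{2}{n}]$, so $\Upsilon_{T_{2,3;n,1}}(t)=\max(-nt,\,t-2)$ there; this equals $-nt$ for $t\le\tfrac{2}{n+1}$ and $t-2$ for $\tfrac{2}{n+1}\le t\le\tfrac{2}{n+1}+\epsilon$ with $\epsilon:=\tfrac{2}{n(n+1)}$ (note $\tfrac{2}{n+1}+\epsilon=\tfrac{2}{n}$), which is the assertion.

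The step I expect to be the main obstacle is the finite-dimensional reduction of the first paragraph: one must verify carefully that Livingston's $\Upsilon$ really does reduce to minimizing $\varphi_t$ over \emph{cycles} homologous to $y_n$ rather than arbitrary chains, and that no $U$-translates disturb Maslov gradings $0$ and $1$, since it is exactly this finiteness that makes the competition among representatives collapse to the two ``corner'' generators $y_n$ (at $(0,n)$) and $y_1'$ (at $(1,0)$) against the non-cycles $y_1,\dots,y_{n-1}$. After that the $\varphi_t$-bookkeeping above is routine.
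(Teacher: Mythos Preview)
Your proof is correct and follows essentially the same approach as the paper's: both reduce to $C_{\mathrm{model}}$ via Lemma~\ref{lem:splitting}, use the two representatives $y_n$ and $y_1'+\sum a_iy_i$ furnished by Lemma~\ref{lem:nontrivial} for the upper bound, and argue that below the relevant threshold the only Maslov-$0$ elements lie in $\operatorname{span}\{y_1,\dots,y_{n-1}\}$, which contains no nonzero cycle since $\partial y_i=w_i$. Your write-up is in fact more explicit than the paper's on two points: the finite-dimensional reduction (checking that Maslov gradings $0$ and $1$ receive no nontrivial $U$-translates, so one really minimizes over finitely many $w$) and the $\varphi_t$-bookkeeping for the lower bound, which the paper compresses into the one-line assertion ``if the $y$-intercept of $L$ is less than $n$, $H_*(C_{\mathrm{model}}(L))_{(0)}=0$''; you have supplied the justification for that assertion. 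Your explicit $\epsilon=\tfrac{2}{n(n+1)}$ is also a welcome sharpening of the paper's unspecified ``small $\epsilon$''.
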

\begin{proof} Recall that the $\Upsilon$ is determined by when an inclusion map from a subcomplex of $C^\infty$ to $C^\infty$ is nontrivial in homology at the Maslov grading 0. Hence it is enough to consider the direct-summand $C_{model}$ in which all Maslov grading 0 elements of $C^\infty$ locate. 
Let $L$ be a line on the $(i,j)$-plane of $C_{model}$ with negative slope, and $C_{model}(L)$ be the subcomplex of $C_{model}$ generated by elements in the lower half-plane to $L$ (including $L$).

We first consider lines $L$ with slope less than $-n$. For those lines, if the $y$-intercept of $L$ is less than $n$, $H_*(C_{model}(L))_{(0)}=0$. 
If the $y$-intercept of $L$ is $\geq n$, then $[y_n]$ become nontrivial in $H_*(C_{model}(L))_{(0)}$. By Lemma \ref{lem:nontrivial}, $[y_n]$ is non-trivial in $H_*(C_{model})$. Since the equation for $L$ is given as $Alex=\tfrac{2}{t}{s}+(1-\tfrac{2}{t})Alg$, we get $\Upsilon_{T_{2,3;n,1}}(t)=-nt$ for $t\leq\frac{2}{1+n}$. 

Similarly, suppose that the slope of a line $L$ is between $-n$ and $-n+\delta$ for some $0<\delta<1$. In this situation, if $x$-intercept is less than 1, then $H_*(C_{model}(L))_{(0)}=0$. If $x$-intercept is equal to $1$, then $H_*(C_{model}(L))_{(0)}$ is generated by $[y_1'+\sum\limits_{i=1}^{n-1} a_iy_i]$ which is non-trivial in $H_*(C_{model})$ by Lemma \ref{lem:nontrivial}. Hence $\Upsilon_{T_{2,3;n,1}}(t)=t-2$ for $\frac{2}{1+n}\leq t\leq\frac{2}{1+n-\delta}$.
\end{proof}

\section{Proof of Theorem \ref{thm:A}}\label{sec:ProofOfTheoremA}
\begin{theoremA}There exists a $\Z^\infty$-summand in $\mathcal{C}_{\Delta}$.\end{theoremA}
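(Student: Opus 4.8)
The plan is to produce an explicit infinite family of topologically slice knots with trivial Alexander polynomial that spans a $\Z^\infty$-summand of $\cC_\Delta$. The natural candidates are the knots $\{D_{n,1}\}_{n=2}^\infty$, where $D$ is the positively-clasped untwisted Whitehead double of the right-handed trefoil. First I would check that each $D_{n,1}$ lies in $\cC_\Delta$: the Whitehead double $D$ has trivial Alexander polynomial, and the $(n,1)$-cabling operation applied to a knot with trivial Alexander polynomial again yields a knot with trivial Alexander polynomial (the cabling formula for the Alexander polynomial, with $q=1$, kills the companion's contribution and the $(n,1)$-torus knot is unknotted), so indeed $D_{n,1}\in\cC_\Delta$. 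By Example~\ref{ex:D}, $D$ is $\nu^+$-equivalent to $T_{2,3}$, so by Theorem~\ref{thm:B} we have $\Upsilon_{D_{n,1}}\equiv\Upsilon_{T_{2,3;n,1}}$, and by Theorem~\ref{thm:C} the function $\Upsilon_{D_{n,1}}$ is known on $[0,\frac{2}{1+n}+\epsilon]$: it equals $-nt$ up to $t=\frac{2}{1+n}$ and then has slope $+1$ immediately afterward.

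The key point is that these $\Upsilon$ functions have their first singularity at $t_n:=\frac{2}{1+n}$, and these singular points are distinct for distinct $n$. Since $\Upsilon$ is a homomorphism $\cC\to \mathrm{Cont}([0,2])$ (Proposition~\ref{prop:HFinvariants}~(1)), I would extract from the family $\{\Upsilon_{D_{n,1}}\}$ a collection of linear functionals on the subgroup they generate. Concretely, for each $n$ consider the functional that records the jump in the derivative (the change of slope) of $\Upsilon$ at the point $t_n$; call it $\phi_n$. Because $\Upsilon_{D_{n,1}}$ is genuinely singular at $t_n$ (slope jumps from $-n$ to $+1$, a jump of $n+1\neq 0$) while $\Upsilon_{D_{m,1}}$ is linear — hence has zero slope-jump — at $t_n$ for all $m<n$ (since for $m<n$ we have $t_n<t_m$ and $\Upsilon_{D_{m,1}}$ is linear on $[0,t_m]$), the "matrix" $(\phi_n(D_{m,1}))$ is triangular with nonzero diagonal entries. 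This simultaneously proves linear independence of the $D_{n,1}$ in $\cC$ and, more importantly, gives the splitting: the homomorphism $\cC_\Delta\to\bigoplus_n\Z$ assembled from suitable integer combinations of these slope-jump functionals (after clearing denominators) restricts to an isomorphism on the span of $\{D_{n,1}\}$, exhibiting that span as a $\Z^\infty$-summand. To be careful I would order the arguments by decreasing $n$, or equivalently use the standard fact that if a family of homomorphisms $f_n:G\to\Z$ satisfies $f_n(g_n)\neq 0$ and $f_n(g_m)=0$ for $m<n$, then $\{g_n\}$ generates a direct summand isomorphic to $\Z^\infty$ (pass to the projection and split off finite-rank pieces inductively).

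The main obstacle is making the "slope-jump at $t_n$" functional genuinely a homomorphism valued in a fixed lattice, and ensuring the triangularity claim is airtight. Since $\Upsilon$-functions of knots are piecewise linear with singularities possibly at various rational points, the slope-jump at a fixed $t_n$ is a well-defined linear functional $\cC\to\R$, but its image need not lie in $\Z$; one must either rescale by a common denominator (the singularities of $\Upsilon_K$ for $K$ in our subgroup occur at controlled points) or instead directly invoke a packaging lemma — the argument of Hom~\cite{Hom:2015-3} and Ozsv\'ath--Stipsicz--Szab\'o~\cite{Ozsvath-Stipsicz-Szabo:2014-1} shows exactly how to turn such a staircase of first-singularities into a $\Z^\infty$-summand. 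So the real content beyond Theorems~\ref{thm:B} and~\ref{thm:C} is: (i) verify $D_{n,1}\in\cC_\Delta$ via the Alexander polynomial cabling formula; (ii) observe $t_n=\frac{2}{1+n}$ are distinct and that $\Upsilon_{D_{m,1}}$ is linear past $t_n$ for $m<n$; (iii) apply the standard splitting lemma for homomorphisms to $\Z$. Finally, I would remark that comparing with $OSS=\{D_{n,2n-1}\#-T_{n,2n-1}\}$: the same slope-jump functionals, evaluated on the $OSS$ knots (whose $\Upsilon$ first singularities, by Wang~\cite{Wang:2016-1}, occur at different points than $\frac{2}{1+n}$, or can be separated by one more functional), show a subfamily of $\{D_{n,1}\}$ is linearly independent from $OSS$, as claimed in the introduction.
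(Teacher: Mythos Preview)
Your outline is essentially the paper's proof: the same knots $D_{n,1}$, the same reduction via Theorems~\ref{thm:B} and~\ref{thm:C} to the derivative jump of $\Upsilon$ at $t_n=\tfrac{2}{1+n}$, and the same triangularity observation. The paper resolves your flagged ``main obstacle'' with a specific normalization: by \cite[Proposition~1.7]{Ozsvath-Stipsicz-Szabo:2014-1} the quantity $\tfrac{1}{1+k}\,\Delta\Upsilon'_K\bigl(\tfrac{2}{1+k}\bigr)$ is always an integer, so $\phi([K])=\bigl(\tfrac{1}{1+k}\Delta\Upsilon'_K(\tfrac{2}{1+k})\bigr)_{k\ge 2}$ lands in $\bigoplus_k\Z$ (finitely many nonzero entries since $\Upsilon_K$ has finitely many singularities) and satisfies $\phi([D_{n,1}])=(\ast,\ldots,\ast,1,0,0,\ldots)$, hence is surjective.

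One correction: your ``standard fact'' is false as stated. Triangularity with merely nonzero diagonal does \emph{not} force the span of the $g_n$ to be a direct summand---take $G=\bigoplus_n\Z$, $g_n=2e_n$, $f_n=e_n^*$. What survives is that the image of $(f_n):G\to\bigoplus_n\Z$ is a subgroup of a free abelian group containing infinitely many independent elements, hence free of infinite rank, and the surjection of $G$ onto that image splits; this yields \emph{some} $\Z^\infty$-summand, which is all Theorem~A claims. The paper's normalization sidesteps this by making the diagonal entries equal to $1$, so that the span of the $D_{n,1}$ themselves is the summand. (Minor aside: the location of the first singularity for the $OSS$ knots comes from \cite{Ozsvath-Stipsicz-Szabo:2014-1}, not Wang; Wang's result concerns Hom's knots.)
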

\begin{proof}
	Let $D_{n,1}$ be the $(n,1)$ cable of positively-clasped untwisted Whitehead double of the trefoil knot, $n\geq2$. It is easy to see that the Alexander polynomial of $D_{n,1}$ is trivial from the fact that the Alexander polynomial of any untwisted Whitehead double is trivial and the cabling formula for the Alexander polynomial.
	
Let $\Delta\Upsilon'_k(t_0):=\lim\limits_{t\goesdownto t_0} \Upsilon'_K(t)-\lim\limits_{t\goesupto t_0} \Upsilon'_K(t)$.	Consider a homomorphism $\phi:\cC\rightarrow\Z^\infty$ defined by 
	$$\phi([K])=\left(\tfrac{1}{1+k}\Delta\Upsilon'_K(\tfrac{2}{1+k})\right)^\infty_{k=2}.$$
	Note that $\phi$ takes $[K]$ to finitely many nonzero integers by \cite[Propostions 1.4 and 1.7]{Ozsvath-Stipsicz-Szabo:2014-1}. Since $D$ and $T_{2,3}$ are $\nu^+$-equivalent, it follows from Theorems \ref{thm:B} and \ref{thm:C} that $\phi([D_{n,1}])=(*,\cdots,*,1,0,0,\cdots)$, where 1 is $n-1$'s coordinate. Therefore, $\phi$ is surjective, and Theorem A follows.
\end{proof}

In the introduction we claim that our knots are linearly independent to the knots introduced by Ozsv\'ath, Stipsicz and Szab\'o, which generate a $\Z^\infty$-summand in $\cC_T$. We prove the claim here.
\begin{proposition}
	Consider the following sets of knots:
	$$KP:=\{D_{2n-1,1}\}_{n=2}^\infty\text{ and }OSS:=\{D_{n,2n-1}\#-T_{n,2n-1}\}_{n=2}^\infty.$$ 
	Each of $KP$ and $OSS$ generate a $\Z^\infty$-summand in $\cC_T$, but they are linearly independent in $\cC$.
\end{proposition}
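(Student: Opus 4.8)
The plan is to combine the $\Upsilon$-computation from Theorem~\ref{thm:C} (transported to $D_{n,1}$ via Theorem~\ref{thm:B}) with the known $\Upsilon$-behaviour of the $OSS$ knots, and to exploit the fact that the \emph{first singularity} of each $\Upsilon$ function pins down a coordinate of a surjective homomorphism to $\Z^\infty$. Concretely, I would first record where the singularities occur. By Theorem~\ref{thm:C} and Example~\ref{ex:D} (so that $\Upsilon_{D_{n,1}}\equiv\Upsilon_{T_{2,3;n,1}}$), the knot $D_{2n-1,1}\in KP$ has its first singularity of $\Upsilon'$ at $t=\tfrac{2}{2n}=\tfrac1n$, with jump $\Delta\Upsilon'=(t-2)'-(-(2n-1)t)'\big|=1+(2n-1)=2n$ there. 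On the $OSS$ side, Ozsv\'ath--Stipsicz--Szab\'o computed $\Upsilon_{D_{n,2n-1}\#-T_{n,2n-1}}$ in \cite{Ozsvath-Stipsicz-Szabo:2014-1}; its first singularity sits at $t=\tfrac 2n$ (this is the value coming from the $(n,2n-1)$-cable, and one should quote the precise statement there). Thus the $KP$-singularities lie at $\{\tfrac12,\tfrac13,\tfrac14,\dots\}$ while the $OSS$-singularities lie at $\{1,\tfrac23,\tfrac24,\tfrac25,\dots\}=\{1,\tfrac23,\tfrac12,\tfrac25,\dots\}$: after the first few, the two families interleave but never coincide beyond controlled overlaps, and more importantly the \emph{magnitudes} of the jumps differ.

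\textbf{Step 1: $KP$ generates a $\Z^\infty$-summand.} This is essentially Theorem~\ref{thm:A} restricted to the subsequence $n\mapsto 2n-1$. Using the homomorphism $\phi\colon\cC\to\Z^\infty$ from the proof of Theorem~\ref{thm:A} (reading off $\tfrac1{1+k}\Delta\Upsilon'_K(\tfrac2{1+k})$ in odd coordinates $k=2n-1$), one checks that $\phi([D_{2n-1,1}])$ is supported on coordinates $\le 2n-1$ with a $1$ in coordinate $2n-2$; hence the restricted map is surjective onto the corresponding $\Z^\infty$, and since $KP\subset\cC_\Delta\subset\cC_T$, we get a $\Z^\infty$-summand of $\cC_T$. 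The same argument with the $OSS$ computation of \cite{Ozsvath-Stipsicz-Szabo:2014-1} gives the companion statement for $OSS$ (this is exactly what they prove).

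\textbf{Step 2: linear independence of $KP\cup OSS$ in $\cC$.} I would build a single homomorphism $\Psi\colon\cC\to\Z^\infty\oplus\Z^\infty$ by pairing $\Upsilon'$-jumps at the $KP$-values $\{\tfrac1n\}$ with $\Upsilon'$-jumps at the $OSS$-values $\{\tfrac2n\}$, i.e.
\[
\Psi([K])=\Bigl(\bigl(\Delta\Upsilon'_K(\tfrac1n)\bigr)_{n\ge2},\ \bigl(\Delta\Upsilon'_K(\tfrac2n)\bigr)_{n\ge2}\Bigr),
\]
which is a homomorphism by Proposition~\ref{prop:HFinvariants}(1) and lands in the direct sum by \cite[Propositions 1.4, 1.7]{Ozsvath-Stipsicz-Szabo:2014-1}. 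On this pairing the matrix of $\Psi$ restricted to $KP\cup OSS$ is (block) triangular with nonzero diagonal: each $D_{2m-1,1}$ contributes its first jump at $t=\tfrac1m$ and contributes $0$ at every $t<\tfrac1m$ among the chosen values, while each $D_{m,2m-1}\#-T_{m,2m-1}$ contributes its first jump at $t=\tfrac2m$ and vanishes below it; the two families interact only through values $\tfrac1n=\tfrac2{2n}$, where the $OSS$ knot at index $2n$ could in principle register, but its \emph{first} singularity is at $\tfrac2{2n}=\tfrac1n$ only for that single index and the diagonal entry is still nonzero, so after ordering the generators by the location of their first singularity the whole system is triangular with nonzero diagonal entries. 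Hence $\Psi$ is injective on the subgroup generated by $KP\cup OSS$, which gives linear independence in $\cC$.

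\textbf{The main obstacle} is Step~2's bookkeeping: one must verify that no $OSS$ generator can cancel a $KP$ generator at a shared dyadic value $\tfrac1n$, which requires knowing the sign/size of the $OSS$ $\Upsilon'$-jumps (not just their location) at those points and checking the shared-index overlaps carefully; this needs the explicit $\Upsilon_{D_{n,2n-1}\#-T_{n,2n-1}}$ formula from \cite{Ozsvath-Stipsicz-Szabo:2014-1} rather than just the qualitative ``first singularity at $\tfrac2n$'' statement. Once the jump data for both families is in hand, triangularity is immediate and the proposition follows.
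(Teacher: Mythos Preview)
Your Step~1 matches the paper: restricting the homomorphism $\phi$ from the proof of Theorem~\ref{thm:A} to the odd coordinates $k=2n-1$ shows that $KP$ generates a $\Z^\infty$-summand, and the $OSS$ case is already in \cite{Ozsvath-Stipsicz-Szabo:2014-1}.

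The gap is in Step~2, and it stems from a factual error about the $OSS$ knots. You assert that the first singularity of $\Upsilon_{D_{n,2n-1}\#-T_{n,2n-1}}$ occurs at $t=\tfrac{2}{n}$. In fact (see the proof of \cite[Theorem~1.20]{Ozsvath-Stipsicz-Szabo:2014-1}) it occurs at $t=\tfrac{2}{2n-1}$. This changes everything: the $KP$ first singularities lie at $\{\tfrac{2}{2n}\}_{n\ge 2}$ and the $OSS$ first singularities lie at $\{\tfrac{2}{2n-1}\}_{n\ge 2}$, so the two families of first singularities are at values $\tfrac{2}{k}$ with $k$ even and $k$ odd respectively, hence \emph{disjoint}. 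Linear independence is then immediate: among all knots appearing with nonzero coefficient in a putative relation, take the one whose first singularity lies at the smallest $t_0$; every other knot in the relation is smooth at $t_0$, so $\Delta\Upsilon'$ of the combination is nonzero at $t_0$, a contradiction.

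In other words, the ``main obstacle'' you identify---possible overlaps at $\tfrac{1}{n}=\tfrac{2}{2n}$ requiring knowledge of the precise $OSS$ jump sizes---is an artifact of the incorrect singularity location. With the right value there is no overlap to worry about, no jump magnitudes to compare, and no need for the more elaborate homomorphism $\Psi$; the paper's proof is just the observation about even versus odd denominators.
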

\begin{proof}
By restricting the range of $\phi$ (in the proof of Theorem \ref{thm:A} above) only to odd coordinates, one can easily see that $KP$ generates $\Z^\infty$-summand in $\cC_T$. Recall that the $\Upsilon$ functions of knots in $OSS$ have the first singularities at $t=\frac{2}{2n-1}$ (see the proof of \cite[Theorem 1.20]{Ozsvath-Stipsicz-Szabo:2014-1}, but knots in $KP$ have them at $t=\frac{2}{2n}$. 
\end{proof}
\bibliographystyle{amsalpha}
\renewcommand{\MR}[1]{}
\bibliography{research}
\end{document}